\newcommand\QQ{\mathbb{Q}}
\newcommand\CC{\mathbb{C}}
\newcommand\RR{\mathbb{R}}
\newcommand\NN{\mathcal{N}}
\newcommand\N{\mathbf{N}}
\newcommand\TT{\mathcal{T}}
\newcommand\ZZ{\mathbb{Z}}
\newcommand\ZZp{\ZZ_{>0}}
\newcommand\ZZnz{\ZZ_{\ne 0}}
\newcommand\PP{\mathbb{P}}
\newcommand\xx{\mathbf{x}}
\newcommand\kk{\mathbf{k}}
\newcommand\EE{\mathcal{E}}
\newcommand\dd{\,\mathrm{d}}
\newcommand{\base}[4]{\ee^{({#1},{#2},{#3},{#4})}}
\newcommand{\Base}[4]{\N^{({#1},{#2},{#3},{#4})}}
\newcommand{\congr}[3]{{#1} \equiv {#2}\ (\mathrm{mod}\ {#3})}
\newcommand{\cp}[2]{{\gcd(#1,#2)=1}}
\newcommand{\Atwo}{{\mathbf A}_2}
\newcommand{\tS}{{\widetilde S}}
\renewcommand{\le}{\leqslant}
\renewcommand{\ge}{\geqslant}
\newcommand{\ee}{\boldsymbol{\eta}}
\renewcommand{\aa}{\boldsymbol{\alpha}}
\newcommand{\ex}[1]{*+<10pt>[o][F]{#1}}
\newcommand\e{\eta}
\newcommand\al{\alpha}
\newcommand\ep{\varepsilon}
\newcommand\RE{\Re e}
\newcommand\phis{\phi^*}
\newcommand\phid{\phi^\dagger}
\newcommand\Ga{\mathbb{G}_\mathrm{a}}
\renewcommand{\theta}{\vartheta}
\newtheorem*{theorem}{Theorem}
\newtheorem{lemma}{Lemma}
\numberwithin{equation}{section}
\begin{document}

\title[Manin's conjecture for a quintic del Pezzo surface] {Manin's
  conjecture for a quintic del Pezzo surface with $\Atwo$ singularity}

\author{Ulrich Derenthal} 

\address{Institut f\"ur Mathematik, Universit\"at
  Z\"urich, Winterthurerstrasse 190, 8057 Z\"urich, Switzerland}

\email{ulrich.derenthal@math.unizh.ch}

\begin{abstract}
  Manin's conjecture is proved for a split del Pezzo surface of degree 5 with
  a singularity of type $\Atwo$.
\end{abstract}

\subjclass[2000]{Primary 11G35; Secondary 14G05}

\maketitle

\tableofcontents

\section{Introduction}\label{sec:intro}

Let $S \subset \PP^5$ be the del Pezzo surface of degree $5$ defined by
\begin{equation}\label{eq:surface}
  \begin{split}
    &x_0x_2-x_1x_5 = x_0x_2-x_3x_4 = x_0x_3+x_1^2+x_1x_4\\ =
    {}&x_0x_5+x_1x_4+x_4^2 = x_3x_5+x_1x_2+x_2x_4 = 0.
  \end{split}
\end{equation}
It contains a unique singularity of type $\Atwo$ and four lines, all
of them defined over $\QQ$. Let $U \subset S$ be the complement
of these lines.

We define the height of any rational point $\xx \in S(\QQ)$ that is
represented by integral and relatively coprime coordinates $(x_0,
\dots, x_5)$ as
\[H(\xx) := \max\{|x_0|,\dots,|x_5|\}.\] For any $B\ge 1$, let
\[N_{U,H}(B) := \#\{\xx \in U(\QQ) \mid H(\xx) \le B\}\] be the number
of rational points in $U$ whose height is at most $B$.

We prove the following result:

\begin{theorem}
  We have
  \[N_{U,H}(B) = c_{S,H}B(\log B)^4+O(B(\log B)^{4-1/5}),\] where
  \[c_{S,H}=\frac{1}{864}\cdot \omega_\infty\cdot\prod_p\left(1-\frac 1
    p\right)^5 \left(1+\frac 5 p+\frac 1 {p^2}\right)\] and
  \[\omega_\infty=\int_{|t_5|,|t_1|,|t_1t_5^2t_6^2+t_1^2t_6|,|t_1t_5t_6|,|t_5^2t_6+t_1|,|t_5^3t_6^2+t_1t_5t_6|\le 1,\ t_5 > 0} \dd t_1 \dd t_5 \dd t_6.\]
\end{theorem}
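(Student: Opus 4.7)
The plan is to apply the universal torsor method. Since $S$ has an $\Atwo$ singularity, I would first pass to the minimal desingularization $\tS \to S$; its Picard group has rank $5$, which matches the exponent $4$ of $\log B$ in the theorem. The negative curves on $\tS$ are the strict transforms of the four lines on $S$ together with the two $(-2)$-curves from the resolution of $\Atwo$. From their intersection pattern one computes the Cox ring of $\tS$; for split singular del Pezzo surfaces of this kind one expects a presentation with seven generators and a single trinomial relation, so the universal torsor $\TT$ is realised as a hypersurface in $\mathbb{A}^7$.

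The Cox ring then identifies $U(\QQ)$ with the integer points on $\TT$ satisfying certain coprimality conditions, modulo an action of a rank-$5$ torus. Writing the map from torsor variables to $(x_0,\dots,x_5)$ as monomials (read off from the sections of the anticanonical bundle realising $S \hookrightarrow \PP^5$), the bound $\max |x_i| \le B$ translates to a region $\mathcal{R}(B) \subset \RR^7$. The three free integration variables $t_1, t_5, t_6$ visible in $\omega_\infty$ suggest eliminating four torsor variables by means of the trinomial equation together with three of the natural coprimality relations.

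The counting then becomes a nested sum. Möbius inversion disentangles the coprimalities, and the innermost sums over the small variables are replaced, one at a time, by real integrals via standard lattice-point estimates with boundary error (of the type used by de la Bret\`eche and Browning). After each replacement the remaining outer sum is estimated; at the last stage the main term emerges as the archimedean integral $\omega_\infty$ multiplied by a multiplicative sum in the discarded variables that factors as the Euler product $\prod_p (1-1/p)^5(1+5/p+1/p^2)$, matching Peyre's prediction and yielding the coefficient $\tfrac{1}{864}$.

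The principal obstacle will be the control of error terms. Several of the constraints defining $\omega_\infty$, for instance $|t_5^3 t_6^2 + t_1 t_5 t_6| \le 1$, couple the variables non-linearly, so the boundary of the region cut out at each summation stage is irregular; the order in which coprimalities are resolved analytically versus combinatorially must be chosen carefully, and each inner integration must save a factor of a positive power of $B$, so that the accumulated error is kept within the claimed bound $O(B(\log B)^{4-1/5})$ rather than a weaker power saving.
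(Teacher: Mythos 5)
Your outline follows the same route as the paper (universal torsor, M\"obius inversions, replacement of sums by integrals, Euler product for the arithmetic factor), but as it stands it is a plan rather than a proof, and the points you defer are exactly where the work lies. First, a factual slip: the Cox ring of $\tS$ here has \emph{eight} generators (the six negative curves $E_1,\dots,E_6$ from the four lines and the two $(-2)$-curves, plus the two extra sections $A_1,A_2$) and one trinomial relation $\e_4\e_5^2\e_6+\e_1\al_1+\e_2\al_2=0$, so the torsor is a hypersurface in $\mathbb{A}^8$, not $\mathbb{A}^7$; a hypersurface in $\mathbb{A}^7$ would have dimension $6\ne\dim\tS+\operatorname{rank}\operatorname{Pic}(\tS)=7$. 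Second, and more seriously, ``each inner integration must save a factor'' is not something one can arrange in a single fixed order of summation: after the linear count over $\al_1,\al_2$, the paper must split the remaining count according to whether $|\e_5|\ge|\e_6|$ or $|\e_5|<|\e_6|$ and sum over the \emph{larger} of the two variables first in each regime, and it must treat separately (by a dyadic-interval counting argument giving $O(B(\log B)^3(\log\log B)^2)$) the range where $\e_1^2\e_2^2\e_3^3\e_4^2$ is within $(\log B)^A$ of $B$, where the error from the integral approximation in $\e_6$ is not under control. Without this decomposition, the first-moment error terms (e.g.\ the $\sup_{t_5}g_0$ and $\sup_{t_6}g_0$ terms) do not sum to $o(B(\log B)^4)$, let alone to the claimed $O(B(\log B)^{4-1/5})$; the specific exponent comes from an optimization ($\lambda=(\log B)^{16/5}$ balancing $\lambda^{1/4}B(\log B)^3$ against $\lambda^{-1}B(\log B)^7$), which your sketch gives no mechanism to produce.

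There is also a gap in how you obtain the constant. The factor $1/864$ does not drop out of the Euler product or of ``matching Peyre's prediction''; in the paper it arises because the height constraint leaves the region $\EE^*(B)$ defined by $\e_1^2\e_2^2\e_3^3\e_4^2\le B$ together with $\e_1^3\e_2^3\e_3^4\e_4^2>B$, so the sum over $\e_1,\dots,\e_4$ is evaluated as the \emph{difference} of two Dirichlet-series counts via a Tauberian theorem, giving
$\frac{1}{4!}\bigl(\frac{1}{2^3\cdot 3}-\frac{1}{2\cdot 3^2\cdot 4}\bigr)=\frac{1}{864}=\alpha(\tS)$,
while the Euler product $\prod_p(1-1/p)^5(1+5/p+1/p^2)$ is the value $G_\kk(0)$ of the regularized series. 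You need an argument of this kind (or an equivalent computation of the volume of the relevant polytope in the effective cone) to separate the two factors and to justify that the second height constraint $\e_1^3\e_2^3\e_3^4\e_4^2>B$ may be imposed at the cost of an admissible error; your proposal does not address either point.
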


Manin's conjecture \cite{MR89m:11060} predicts that $N_{U,H}(B)$ grows as $cB(\log B)^{k-1}$ for
$B \to \infty$ where $k$ is the rank of the Picard group of the minimal
desingularization $\tS$ of $S$. As $S$ is a del Pezzo surface of degree 5 whose
lines are defined over $\QQ$, we have $k=5$, so our result agrees with this
conjecture.

Peyre \cite{MR1340296} predicts that $c$ is a product of a constant
$\alpha(\tS)$ whose value is $1/864$ by \cite{MR2318651} and
\cite{math.AG/0703202} and a product of local densities. We expect
that $(1-1/p)^5(1+5/p+1/p^2)$ agrees with the density at each prime
$p$, and that $\omega_\infty$ agrees with the real density, but we do
not check this here.

Note that $S$ is neither toric nor an equivariant compactification of
$\Ga^2$, so our theorem is not a consequence of \cite{MR1620682} or
\cite{MR1906155}.

For the proof of the theorem, we use the basic strategy of \cite{MR2320172},
\cite{MR2332351} and \cite{MR2290499} together with the techniques
introduced in \cite{arXiv:0710.1560}. In Section~\ref{sec:torsor}, we translate the
counting problem to the question of integral points on a universal torsor and
split their counting into three parts. As outlined at the end of
Section~\ref{sec:torsor}, these parts are handled separately in
Sections~\ref{sec:Nb2} to \ref{sec:Nb1} and put together again in
Section~\ref{sec:final} to complete the proof of the theorem.

\section{A universal torsor}\label{sec:torsor}

We use the notation \[\ee=(\e_1,\dots,\e_4), \quad \ee' = (\e_1,
\dots, \e_6), \quad \aa = (\al_1, \al_2)\] and, for $(n_1, \dots, n_4)
\in \QQ^4$, \[\base{n_1}{n_2}{n_3}{n_4} :=
\e_1^{n_1}\e_2^{n_2}\e_3^{n_3}\e_4^{n_4}.\]

By the method of \cite{MR2290499} and using the data of \cite{math.AG/0604194}
on the geometry of $S$ and its minimal desingularization $\tS$, we obtain a
bijection $\Psi : \TT \to U(\QQ)$ with
\[\TT := \{(\ee',\aa) \in \ZZ^5 \times \ZZp \times \ZZ^2 \mid
\text{\eqref{eq:torsor} and coprimality conditions hold} \}\] where
\begin{equation}\label{eq:torsor} \e_4\e_5^2\e_6 + \e_1\al_1 + \e_2\al_2=0
\end{equation}
and the coprimality conditions are described by the extended Dynkin
diagram of $E_1, \dots, E_6, A_1, A_2$ in Figure~\ref{fig:dynkin},
using the rule that two variables are coprime unless the corresponding
divisors in the diagram are connected by an edge.  The map $\Psi$
sends $(\ee',\aa) \in \TT$ to
\[(\base 2 2 3 2 \e_5, \base 2 1 2 1 \al_1, \e_6\al_1\al_2,
\base 1 0 1 1 \e_5\e_6\al_1, \base 1 2 2 1 \al_2, \base 0 1 1 1 \e_5\e_6\al_2)\]
in $U(\QQ)$.

\begin{figure}[ht]
  \centering
  \[\xymatrix{ A_1 \ar@{-}[rrr]\ar@{-}[dr]\ar@{-}[dd]&&& E_1\ar@{-}[dr]\\
      & E_6 \ar@{-}[r]
      & E_5\ar@{-}[r] & \ex{E_4}\ar@{-}[r] & \ex{E_3}\\
      A_2\ar@{-}[rrr]\ar@{-}[ur] &&& E_2\ar@{-}[ur]}\]
  \caption{Configuration of curves on $\tS$.}
  \label{fig:dynkin}
\end{figure}
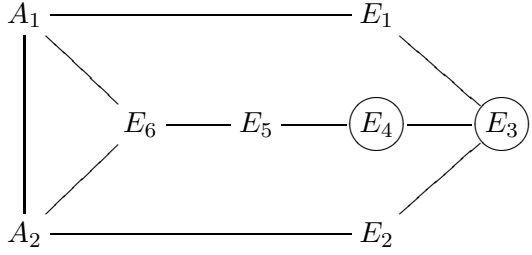

Note that these coprimality conditions imply that the formula above
for $\Psi(\ee',\al)$ results in relatively coprime coordinates
$\Psi(\ee',\al)_i$, so \[H(\Psi(\ee',\al)) =
\max_i\{|\Psi(\ee',\al)_i|\}.\]
With \eqref{eq:torsor}, $H(\Psi(\ee',\aa)) \le B$ implies
\begin{equation}
  \label{eq:further_height}
  \base 1 1 2 2 \e_5^2|\e_6| \le 2B, \quad \base 0 0 1 2 \e_5^3 |\e_6|^2 \le 2B.
\end{equation}

Using \eqref{eq:torsor}, the coprimality conditions can be rewritten as
\begin{align}
  \label{eq:cpal2} &\cp{\al_2}{\e_3\e_5},\\
  \label{eq:cpal1} &\cp{\al_1}{\e_3\e_4},\\
  \label{eq:cpe6} &\cp{\e_6}{\e_1\e_2\e_3\e_4},\\
  \label{eq:cpe5} &\cp{\e_5}{\e_1\e_2\e_3},\\
  \label{eq:cpe} &\cp{\e_1}{\e_2},\ \cp{\e_1}{\e_4},\ \cp{\e_2}{\e_4}.
\end{align}

Therefore, the number $N_{U,H}(B)$ coincides with the number of
$(\ee',\aa) \in \ZZp^5 \times \ZZnz \times \ZZ^2$ which satisfy the
torsor equation \eqref{eq:torsor}, the coprimality conditions
\eqref{eq:cpal2}--\eqref{eq:cpe} and the height condition
$H(\Psi(\ee',\aa)) \le B$.

Our further strategy is as follows. For fixed $\ee'$, we estimate the
number of $\aa$ satisfying the torsor equation, the coprimality
conditions and the height condition. We sum this number over all
suitable $\ee'$ afterwards. To get a hold of the error terms in these
summations, it will be useful to do this summations in different
orders depending on the relative size of $\e_1, \dots, \e_6$.

We denote the number of $(\ee',\aa)$ contributing to $N_{U,H}(B)$ that fulfill
\begin{equation}
  \label{eq:bige5}
  |\e_5| \ge |\e_6|
\end{equation}
by $N_a(B)$, and the number of those satisfying
\begin{equation}
  \label{eq:bige6}
  |\e_5| < |\e_6|.
\end{equation}
by $N_b(B)$.

We split the elements contributing to $N_b(B)$ further into two subsets: For
some $A>0$ to be chosen in Section~\ref{sec:final}, let $N_{b_1}(B;A)$ be the
number of $(\ee',\aa)$ satisfying \eqref{eq:bige6} and
\begin{equation}\label{eq:logBA}
  \base 2 2 3 2 \le \frac{B}{(\log B)^A},
\end{equation}
while $N_{b_2}(B;A)$ is the number of the remaining ones.

We deal with $N_{b_2}(B;A)$ in the following Section~\ref{sec:Nb2}. As
a first step for both $N_a(B)$ and $N_{b_1}(B;A)$, we estimate the
number of $\aa$ in Section~\ref{sec:N0}. For $N_a$, we sum first over
the bigger $\e_5$ and then over $\e_6$ in Section~\ref{sec:Na}, while
for $N_{b_1}(B;A)$, we sum in the reverse order in
Section~\ref{sec:Nb1}. The resulting main terms are put together and
summed over the remaining variables $\e_1, \dots, \e_4$ in
Section~\ref{sec:final} to complete the proof of the theorem.

\section{Estimating $N_{b_2}(B;A)$}\label{sec:Nb2}

Our strategy is to estimate the number of $(\ee',\al)$ lying in dyadic
intervals first, and to sum over all possible intervals in a second
step.

\begin{lemma}\label{lem:Nb2}
  We have $N_{b_2}(B;A) \ll_A B(\log B)^3(\log \log B)^2$.
\end{lemma}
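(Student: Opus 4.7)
My plan is to carry out a dyadic decomposition, as foreshadowed by the author. For each tuple of dyadic intervals $E_i \le \e_i < 2E_i$ for $i=1,\dots,5$ and $E_6 \le |\e_6| < 2E_6$ (plus a sign bit for $\e_6$), let $\mathcal{E}$ denote the resulting box. The torsor equation \eqref{eq:torsor} combined with $\gcd(\e_1,\e_2)=1$ (from \eqref{eq:cpe}) parameterizes the $\aa$ attached to a fixed $\ee'$ as $\al_1=\al_1^{(0)}+\e_2 t$, $\al_2=\al_2^{(0)}-\e_1 t$, $t\in\ZZ$; the height constraint $|x_3|\le B$ bounds $|\al_1|\le B/(\e_1\e_3\e_4\e_5|\e_6|)$, so the number of valid $t$ is $O(1+B/(\e_1\e_2\e_3\e_4\e_5|\e_6|))$. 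Summing over $\ee'\in\mathcal{E}$ yields a per-box count of $O(\prod_i E_i+B)$.

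The $N_{b_2}$-regime provides three structural constraints that control the admissible boxes. First, $\base 2 2 3 2\e_5\le B$ together with \eqref{eq:logBA} forces $\e_5<(\log B)^A$, so $E_5$ takes $O_A(\log\log B)$ dyadic values. Second, the slab $\base 2 2 3 2\in(B/(\log B)^A,2B]$ restricts $(E_1,E_2,E_3,E_4)$ to $O_A((\log B)^3\log\log B)$ dyadic tuples, since for each $(E_3,E_4)$ the product $E_1E_2$ is confined to an interval of logarithmic ratio $O_A(\log\log B)$. Third, the bound $|\e_6|\le\sqrt{2B/(\e_3\e_4^2\e_5^3)}$ from \eqref{eq:further_height}, combined with $|\e_6|>\e_5$ from \eqref{eq:bige6}, controls the range of $E_6$. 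After summing the per-box bound, the contribution $\sum_\mathcal{E}\prod_i E_i$ reduces to counting the total number of $\ee'$ in the $N_{b_2}$-region, which a direct computation via the Dirichlet series $\zeta(2s)^2\zeta(3s+1/2)\zeta(2s+1)$ (double pole at $s=1/2$) bounds by $O_A(B\log B)$---a lower-order contribution. The dominant term is $B\cdot\#\{\mathcal{E}\}$, which gives the claimed bound once $\#\{\mathcal{E}\}$ is estimated appropriately.

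The main obstacle is the last estimate: a naive dyadic count of $\#\{\mathcal{E}\}$ produces $O_A((\log B)^4(\log\log B)^2)$ boxes, losing a spurious factor of $\log B$ coming from the $E_6$-range, which can be as large as $O(\log B)$ dyadic intervals for individual configurations of $(E_1,\dots,E_5)$. Saving this factor requires exploiting the coupling between the $\base 2 2 3 2$-slab and \eqref{eq:further_height}: wide $E_6$-ranges occur only when $\e_1\e_2\e_3$ is large, in which case the slab constraint further restricts $(E_1,E_2)$ to a narrower window than the generic $O(\log B\cdot\log\log B)$ pairs, producing the required cancellation and establishing $\#\{\mathcal{E}\}\ll_A(\log B)^3(\log\log B)^2$.
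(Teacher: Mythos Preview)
Your approach has a genuine gap: the bound $\#\{\aa\}\ll 1+B/(\e_1\e_2\e_3\e_4\e_5|\e_6|)$, obtained from the single height condition $|x_3|\le B$, is too coarse to yield the lemma. In fact not only the ``error'' $B\cdot\#\{\mathcal{E}\}$ but already your ``main term'' $\sum_{\ee'\in R'}B/\prod|\e_i|$ is of order $B(\log B)^4(\log\log B)^2$. To see this, write $u=\log\e_3$, $v=\log\e_4$, $L=\log B$; in the slab one has $\e_1\e_2\approx\sqrt{B/(\e_3^3\e_4^2)}$, so $\sum_{\e_1,\e_2}(\e_1\e_2)^{-1}\asymp_A(L-3u-2v)\log\log B$, while the $\e_6$-sum contributes $\asymp\log(U/\e_5)\asymp(L-u-2v)$ and the $\e_5$-sum another $\log\log B$. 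Hence
\[
\sum_{\ee'}\frac{B}{\prod|\e_i|}\asymp_A B(\log\log B)^2\sum_{\e_3,\e_4}\frac{(L-3u-2v)(L-u-2v)}{\e_3\e_4}\asymp_A B(\log B)^4(\log\log B)^2,
\]
since the last sum is $\asymp\int\!\!\int_{3u+2v\le L}(L-3u-2v)(L-u-2v)\,du\,dv\asymp L^4$. This same integral governs your box count $\#\{\mathcal{E}\}$, confirming that the naive value $(\log B)^4(\log\log B)^2$ is correct: wide $E_6$-ranges (small $e_3+2e_4$) and wide $(E_1,E_2)$-windows (small $3e_3+2e_4$) occur \emph{together}, not in opposition, so the coupling you propose does not produce any saving.

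The paper avoids this by decomposing dyadically in \emph{all eight} variables, including $\al_1,\al_2$, and invoking the geometry-of-numbers estimate of \cite[Lemmas~5,~6]{arXiv:0710.1560} for solutions of the torsor equation in a box:
\[
\NN\ll N_3N_4N_5N_6(N_1A_1)^{1/2}(N_2A_2)^{1/2}+N_1N_2N_3N_4N_5N_6.
\]
Crucially, the first term is then summed using the \emph{full set} of height constraints $N_6A_1A_2\ll B$, $\Base 1 0 1 1 N_5N_6A_1\ll B$, $\Base 0 1 1 1 N_5N_6A_2\ll B$ and $\Base 1 1 2 2 N_5^2N_6\ll B$ simultaneously; it is precisely the interplay of these four inequalities (not available in your six-variable decomposition) that absorbs the extra $\log B$ and reduces the count to $B\sum_{N_1,\dots,N_5}1\ll_A B(\log B)^3(\log\log B)^2$. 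Any repair of your argument must bring the $\aa$-constraints back into play, either by an $8$-variable dyadic split or by a sharper pointwise bound on $\#\{\aa\}$ that uses more than $|x_3|\le B$.
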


\begin{proof}
  Let $\NN=\NN(N_1,\dots,N_6,A_1,A_2)$ be the number of $(\ee',\aa)$
  subject to $N_i/2 < |\e_i| \le N_i$ for $i \in \{1, \dots, 6\}$ and
  $A_j/2 < |\al_j| \le A_j$ for $j \in \{1,2\}$.

  Because of the height conditions and using the notation $\Base
  {n_1}{n_2}{n_3}{n_4} := N_1^{n_1}N_2^{n_2}N_3^{n_3}N_4^{n_4}$,
  we have, if $\NN > 0$,
  \begin{align}
    \label{eq:heightN1}&B(\log B)^{-A} \ll \Base 2 2 3 2 \ll B,\\
    \label{eq:heightN2}&N_6A_1A_2 \ll B,\\
    \label{eq:heightN3}&\Base 1 0 1 1 N_5N_6A_1 \ll B,\\
    \label{eq:heightN4}&\Base 0 1 1 1 N_5N_6A_2 \ll B,\\
    \label{eq:heightN5}&\Base 1 1 2 2 N_5^2N_6 \ll B,\\
    \label{eq:heightN6}&N_5 \ll (\log B)^A.
  \end{align}
  Here, \eqref{eq:heightN5} follows from \eqref{eq:further_height}.
  As in \cite[Lemma~5,~6]{arXiv:0710.1560}, we obtain by estimating the number of $\al_1,
  \al_2$ in two ways first and summing over $\e_1,\dots,\e_6$ afterwards:
  \[\NN \ll N_3N_4N_5N_6(N_1A_1)^{1/2}(N_2A_2)^{1/2} +
  N_1N_2N_3N_4N_5N_6.\]

  Next, we sum this estimate for $\NN(N_1, \dots, N_6, A_1, A_2)$ over all
  possible dyadic intervals, with $N_1, \dots, N_6, A_1, A_2$ subject to
  \eqref{eq:heightN1}--\eqref{eq:heightN6}.

  For the first term, we have using \eqref{eq:heightN2}--\eqref{eq:heightN5}
  \[\begin{split}
    &\sum_{N_1, \dots, N_6,A_1,A_2} \Base {1/2} {1/2} 1 1 N_5 N_6 A_1^{1/2}
    A_2^{1/2}\\
    &\ll B^{1/4} \sum_{N_1, \dots, N_6,A_1,A_2} \Base {1/2} {1/2} 1 1 N_5
    N_6^{3/4} A_1^{1/4} A_2^{1/4}\\
    &\ll B^{3/4} \sum_{N_1,\dots, N_6} \Base
    {1/4}{1/4}{1/2}{1/2}N_5^{1/2}N_6^{1/4}\\
    &\ll B\sum_{N_1,\dots,N_5} 1\\
    &\ll_A B(\log B)^3(\log \log B)^2.
  \end{split}\]
Here we have used that for fixed $N_2,N_3,N_4$, there are
only $O_A(\log \log B)$ possibilities for $N_1$ and $N_5$ by
\eqref{eq:heightN1} and \eqref{eq:heightN6}.

For the second term, we use \eqref{eq:heightN5} to obtain 
\[\begin{split}
  \sum_{N_1,\dots,N_6,A_1,A_2} \Base 1 1 1 1 N_5 N_6
  &\ll B \sum_{N_1,\dots,N_5,A_1,A_2} \frac{1}{N_3N_4N_5}\\ 
  &\ll_A B (\log B)^3 (\log \log B),
\end{split}\]
which completes the proof.
\end{proof}

\section{Real-valued functions}

Let
\begin{equation}\label{eq:h}
h(t_0,t_1,t_5,t_6) :=
\max\left\{
  \begin{aligned}
    &|t_0^4t_5|,|t_0^4t_1|,|t_1t_5^2t_6^2+t_0^2t_1^2t_6|,|t_0^2t_1t_5t_6|,\\
    &|t_0^2t_5^2t_6+t_0^4t_1|,|t_5^3t_6^2+t_0^2t_1t_5t_6|
  \end{aligned}
\right\}.
\end{equation}
Defining
\begin{align*}
  Y_0&:=\left(\frac{\base 2 2 3
    2}{B}\right)^{1/5},& Y_1&:=\left(\frac{B}{\base
    2{-3}{-2}{-3}}\right)^{1/5},\\Y_5&:=Y_0^{-1}, &
Y_6&:=\left(\frac{B}{\base {-3}{-3}{-2}{2}}\right)^{1/5},
\end{align*} we note that the height condition $H(\Psi(\ee',\al)) \le
B$ is equivalent to \[h(Y_0,\al_1/Y_1,\e_5/Y_5,\e_6/Y_6) \le 1.\]

Define
\begin{align}
  \label{eq:g0} g_0(t_0,t_5,t_6) &:= \int_{h(t_0,t_1,t_5,t_6) \le 1} 1 \dd t_1,\\
  \label{eq:g1a} g_1^a(t_0,t_6;\ee;B) &:= \int_{Y_5t_5\ge |Y_6t_6|, t_5>0}
  g_0(t_0,t_5,t_6) \dd t_5,\\
  \label{eq:g1b} g_1^b(t_0,t_5;\ee;B) &:= \int_{|Y_6t_6|>\max\{Y_5t_5,1\}}
  g_0(t_0,t_5,t_6) \dd t_6,\\
  \label{eq:g2a} g_2^a(t_0;\ee;B) &:= \int_{|Y_6t_6|>1} g_1^a(t_0,t_6;\ee;B)
  \dd t_6,\\
  \label{eq:g2b} g_2^b(t_0;\ee;B) &:= \int_0^\infty g_1^b(t_0,t_5;\ee;B) \dd
  t_5.
\end{align}
We have
\begin{equation}\label{eq:g2}
\begin{split}
g_2(t_0;\ee;B) &:=g_2^a(t_0;\ee;B)+ g_2^b(t_0;\ee;B)\\ &=
\int_{h(t_0,t_1,t_5,t_6) \le 1, |Y_6t_6|>1, t_5>0} \dd t_1 \dd t_5 \dd t_6.
\end{split}\end{equation}

\begin{lemma}\label{lem:g}
  Let $\ee \in \ZZp^4$ be given. Then we have:
  \begin{enumerate}
  \item\label{it:g0} $g_0(t_0,t_5,t_6) \ll \frac{1}{t_0|t_6|^{1/2}}$.
  \item\label{it:g1a} $g_1^a(t_0,t_6;\ee;B) \ll \int_0^\infty g_0(t_0,t_5,t_6)
    \dd t_5 \ll \min\{\frac{1}{t_0^{1/2}|t_6|^{5/4}},\frac{1}{t_0^8}\}$.
  \item\label{it:g1b} $g_1^b(t_0,t_5;\ee;B) \ll \int_{-\infty}^\infty
    g_0(t_0,t_5,t_6) \dd t_6 \ll \frac{1}{t_0t_5^{3/4}}$.
  \end{enumerate}
\end{lemma}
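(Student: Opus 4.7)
The plan is to extract each bound from an appropriate entry of the maximum in \eqref{eq:h}, using repeatedly the elementary fact that for $A \in \RR$ and $C>0$ the set $\{s \in \RR : |s^2 + As| \le C\}$ has Lebesgue measure $\ll \sqrt C$ (a single interval of length $\le 2\sqrt{2C}$ when $C \ge A^2/4$, or a pair of intervals of total length $\le 4C/|A| \le 4\sqrt C$ when $C < A^2/4$). For (1), I would factor the third entry of $h$ as $|t_6|\,|t_1|\,|t_5^2 t_6 + t_0^2 t_1| \le 1$ and substitute $s = t_0^2 t_1$ to obtain $|s^2 + (t_5^2 t_6)\,s| \le t_0^2/|t_6|$; the elementary estimate bounds the measure of $s$ by $O(t_0/|t_6|^{1/2})$, and dividing by $t_0^2$ gives $g_0 \ll 1/(t_0 |t_6|^{1/2})$.

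For (2), the inequality $g_1^a \le \int_0^\infty g_0\,\dd t_5$ is immediate from \eqref{eq:g1a}. The bound $\ll 1/t_0^8$ follows trivially from $g_0 \le 2/t_0^4$ (by $|t_0^4 t_1| \le 1$) integrated over $t_5 \in [0,1/t_0^4]$ (by $|t_0^4 t_5| \le 1$). For the refined bound I would change variable from $t_1$ to $u = t_5^2 t_6 + t_0^2 t_1$ with $t_5$ held fixed, so that $\dd t_1 = \dd u/t_0^2$; the fifth entry of $h$ becomes $|u| \le 1/t_0^2$, while the third becomes $|u(u - t_5^2 t_6)| \le t_0^2/|t_6|$. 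Viewed as a quadratic constraint on $t_5^2$ for each fixed $u$, the elementary estimate yields $\int \dd t_5 \ll t_0/(|t_6|\,|u|^{1/2})$ for $|u| \le t_0/|t_6|^{1/2}$ and $\ll t_0^2/(|t_6|^{3/2}\,|u|^{3/2})$ otherwise. Integrating over $|u| \le 1/t_0^2$ and dividing by $t_0^2$ yields $\int g_0\,\dd t_5 \ll 1/(t_0^{1/2}|t_6|^{5/4})$ when both regimes contribute (i.e.\ $|t_6| \ge t_0^6$), and $\ll 1/(t_0^2|t_6|)$ otherwise — but in that sub-case $|t_6| \le t_0^6$, which is precisely equivalent to $1/(t_0^2|t_6|) \le 1/(t_0^{1/2}|t_6|^{5/4})$.

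For (3), $g_1^b \le \int_{-\infty}^\infty g_0\,\dd t_6$ is immediate from \eqref{eq:g1b}. Reusing $u = t_5^2 t_6 + t_0^2 t_1$ (now with $t_5$ fixed and $t_6$ free), the third entry transforms into $|t_5^2 t_6^2 - u\,t_6| \le t_0^2/|u|$, a quadratic in $t_6$ for each fixed $u$; the elementary estimate gives $\int \dd t_6 \ll t_0/(|u|^{1/2} t_5)$ for $|u| \le (t_0^2 t_5^2)^{1/3}$ and $\ll t_0^2/u^2$ otherwise. Integrating over $|u| \le 1/t_0^2$ and dividing by $t_0^2$ yields $\int g_0\,\dd t_6 \ll 1/(t_0^{2/3} t_5^{2/3})$ when both regimes contribute, and $\ll 1/(t_0^2 t_5)$ otherwise. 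The first-entry constraint $|t_0^4 t_5| \le 1$ of $h$ reduces each to the desired $1/(t_0 t_5^{3/4})$: the first via $(t_0^4 t_5)^{1/12} \le 1$, which is equivalent to $1/(t_0^{2/3} t_5^{2/3}) \le 1/(t_0 t_5^{3/4})$; the second in the sub-case $t_0^4 t_5 \gg 1$ (where it actually arises) via $t_0 t_5^{1/4} \gg 1$, equivalent to $1/(t_0^2 t_5) \ll 1/(t_0 t_5^{3/4})$.

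The main technical obstacle I foresee is the book-keeping across the two regimes in each quadratic analysis — correctly identifying the crossover in $|u|$, verifying that both regimes contribute comparable amounts to the $u$-integral, and in the degenerate sub-case (where the crossover exits $[0,1/t_0^2]$) matching the single-regime bound to the final claim via the first-entry constraint $|t_0^4 t_5| \le 1$ (for part~(3)) or the trivial estimate $1/t_0^8$ (for part~(2)).
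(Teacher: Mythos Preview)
Your argument is correct, and for part~(1) it coincides with the paper's. For parts~(2) and~(3), however, the paper takes a lighter route than your Fubini/change-of-variable scheme in $u=t_5^2t_6+t_0^2t_1$. For~(3), the paper simply observes that $|t_0^2t_1t_5t_6|\le 1$ and $|t_5^3t_6^2+t_0^2t_1t_5t_6|\le 1$ force $|t_6|\ll t_5^{-3/2}$, and then integrates the pointwise bound from~(1) over this explicit range of $t_6$---one line, no regime splitting. For the first bound in~(2), the paper extracts \emph{two} pointwise bounds on $g_0$ from the same quadratic analysis used in~(1): the $|a|^{-1/2}$ bound (your $\sqrt{C}$) gives $g_0\ll 1/(t_0|t_6|^{1/2})$, while the $|b|^{-1}$ bound (your $C/|A|$) gives $g_0\ll 1/(t_5^2t_6^2)$; it then splits the $t_5$-range at $t_5\asymp t_0^{1/2}|t_6|^{-3/4}$ and integrates each bound on its side. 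Your approach trades this for a single change of variables followed by a case analysis in $u$; it is more systematic (the same substitution handles both~(2) and~(3)) but heavier in book-keeping, and in part~(3) the second sub-case $t_0^4t_5\gg 1$ is in fact vacuous since $g_0$ already vanishes there.
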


\begin{proof}
  Since $h(t_0,t_1,t_5,t_6) \le 1$ implies $t_1 \le t_0^{-4}$ and $t_5 \le
  t_0^{-4}$, the second bound of \eqref{it:g1a} holds. 

  It is not hard to check that given $a, b \in \RR \setminus \{0\}$, the
  condition $|at_1^2+bt_1| \le 1$ describes a set of $t_1$ whose length is $\ll
  |a|^{-1/2}$ for $b^2 \le 8|a|$, while its length is $\ll |b|^{-1}
  \ll |a|^{-1/2}$ for $b^2 > 8|a|$.
  
  We apply this for $a=t_0^2t_6$ and $b=t_5^2t_6^2$, which gives
  $g_0(t_0,t_5,t_6) \ll (t_0^2|t_6|)^{-1/2}$ which is \eqref{it:g0}.
  Integrating it over $t_6 \ll t_5^{3/2}$ (which holds since $|t_0^2t_1t_5t_6|
  \le 1$ and $|t_5^3t_6^2+t_0^2t_1t_5t_6| \le 1$ imply $|t_5^3t_6^2| \le 2$)
  results in \eqref{it:g1b}.

  For the first bound of \eqref{it:g1a}, we distinguish the case $t_5^4t_6^4
  \le 8t_0^2|t_6|$ and its opposite. In the first case, we combine $t_5 \ll
  t_0^{1/2}|t_6|^{-3/4}$ with \eqref{it:g0}. In the second case, we integrate
  $g_0(t_0,t_5,t_6) \ll t_5^{-2}|t_6|^{-2}$ over $t_5 \gg
  t_0^{1/2}|t_6|^{-3/4}$.
  \end{proof}

Finally, we define
\begin{equation}\label{eq:G}
  G_2(t_0):=\int_{h(t_0,t_1,t_5,t_6)\le 1, t_5 > 0} \dd t_1 \dd t_5 \dd t_6
\end{equation}
which is related to $\omega_\infty$ defined in the statement of our
theorem:

\begin{lemma}\label{lem:G}
  For any $t_0>0$, we have $G_2(t_0) = \frac{\omega_\infty}{t_0^2}$.
\end{lemma}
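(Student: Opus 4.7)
The plan is to exhibit a change of variables in the triple integral defining $G_2(t_0)$ which rescales all six quantities appearing inside the $\max$ defining $h$ to become independent of $t_0$, so that the region of integration becomes literally the region appearing in the definition of $\omega_\infty$, with an explicit Jacobian factor of $t_0^{-2}$.

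Concretely, I would fix $t_0 > 0$ and perform the substitution
\[
u_1 := t_0^4 t_1, \qquad u_5 := t_0^4 t_5, \qquad u_6 := t_0^{-6} t_6,
\]
motivated by the normalization that $t_0=Y_0$ makes the height conditions \eqref{eq:further_height} dimensionless. Then $dt_1\, dt_5\, dt_6 = t_0^{-4}\cdot t_0^{-4}\cdot t_0^{6}\, du_1\, du_5\, du_6 = t_0^{-2}\, du_1\, du_5\, du_6$, and $t_5 > 0 \Leftrightarrow u_5 > 0$.

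The key step is the direct verification that each of the six expressions inside the max in \eqref{eq:h} is mapped to the corresponding expression inside the max in the definition of $\omega_\infty$. Namely, substituting $t_1 = t_0^{-4}u_1$, $t_5 = t_0^{-4}u_5$, $t_6 = t_0^6 u_6$ one checks
\[
t_0^4 t_5 = u_5, \qquad t_0^4 t_1 = u_1, \qquad t_0^2 t_1 t_5 t_6 = u_1 u_5 u_6,
\]
\[
t_1 t_5^2 t_6^2 + t_0^2 t_1^2 t_6 = u_1 u_5^2 u_6^2 + u_1^2 u_6, \qquad t_0^2 t_5^2 t_6 + t_0^4 t_1 = u_5^2 u_6 + u_1,
\]
\[
t_5^3 t_6^2 + t_0^2 t_1 t_5 t_6 = u_5^3 u_6^2 + u_1 u_5 u_6,
\]
so that the condition $h(t_0,t_1,t_5,t_6)\le 1$ becomes exactly the simultaneous inequalities defining $\omega_\infty$ in the variables $u_1, u_5, u_6$.

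Combining these two steps yields
\[
G_2(t_0) = \int_{h\le 1,\, t_5>0} dt_1\, dt_5\, dt_6 = \frac{1}{t_0^2}\int_{u_5>0,\ (\text{\(\omega_\infty\) constraints})} du_1\, du_5\, du_6 = \frac{\omega_\infty}{t_0^2},
\]
as desired. There is no serious obstacle: the only thing to do is the routine (but slightly tedious) algebraic check that the six weighted monomials/binomials in $h$ are precisely homogeneous of the right total degree in $t_0$ to be trivialized by the stated rescaling; this is essentially a sanity check that the definition of $h$ and the normalization of $\omega_\infty$ are mutually consistent, as one should expect given how $Y_0,\ldots,Y_6$ were defined.
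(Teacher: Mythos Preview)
Your proof is correct and follows the same approach as the paper, which simply refers to an analogous argument in \cite[Lemma~7]{arXiv:0710.1560}: a homogeneity change of variables reducing $G_2(t_0)$ to $G_2(1)=\omega_\infty$ with Jacobian $t_0^{-2}$. You have in fact spelled out explicitly the substitution and the verification that each of the six terms in $h$ becomes $t_0$-free, which the paper omits.
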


\begin{proof}
  Similar to \cite[Lemma~7]{arXiv:0710.1560}.
\end{proof}

\section{Estimating $N_a(B)$ and $N_{b_1}(B;A)$ -- first step}\label{sec:N0}

For fixed $\ee'$ subject to the coprimality conditions
\eqref{eq:cpe6}--\eqref{eq:cpe}, let $N_0$ be the number of $\al_1, \al_2$
subject to \eqref{eq:torsor}, $h(Y_0,\al_1/Y_1,\e_5/Y_5,\e_6/Y_6) \le
1$ and the coprimality conditions \eqref{eq:cpal2}, \eqref{eq:cpal1}.

We remove \eqref{eq:cpal2} by a M\"obius inversion and obtain
\[N_0 = 
\sum_{k_2|\e_3\e_5} \mu(k_2) \#\left\{\al_1\ \Bigg|\ 
  \begin{aligned}
    &\congr{\e_4\e_5^2\e_6}{-\e_1\al_1}{k_2\e_2},\\
    &h(Y_0,\al_1/Y_1,\e_5/Y_5,\e_6/Y_6) \le 1,\\
    &\text{\eqref{eq:cpal1} holds}
  \end{aligned}
\right\}.\] The summand vanishes unless $\cp{k_2}{\e_1\e_4}$. Since
$\e_3,\e_5$ are coprime, we write $k_2=k_{23}k_{25}$ uniquely such that
$k_{2i} \mid \e_i$ for $i \in \{3,5\}$. We check that $k_{25}|\al_1$. We
write $\e_5=k_{25}\e_5'$, $\al_1=k_{25}\al_1'$ and obtain
\[N_0=\sum_{\substack{k_{23}|\e_3, k_{25}|\e_5\\\cp{k_{23}k_{25}}{\e_1\e_4}}}
\mu(k_{23})\mu(k_{25}) N_0(k_{23},k_{25})\] where \[N_0(k_{23},k_{25}) =
\#\left\{\al_1' \ \Bigg|\  \begin{aligned}
    &\congr{k_{25}\e_4\e_5'^2\e_6}{-\e_1\al_1'}{k_{23}\e_2},\\
    &h(Y_0,\al_1'k_{25}/Y_1,\e_5/Y_5,\e_6/Y_6) \le 1,\\
    &\cp{k_{25}\al_1'}{\e_3\e_4}
  \end{aligned}\right\}.\] 
Note that $\cp{k_{25}}{\e_3\e_4}$ holds automatically, so we may remove this
condition. We remove the coprimality condition for $\al_1'$ by another
M\"obius inversion and obtain, writing
$\al_1'=k_1\al_1''$,
\[N_0(k_{23},k_{25})=\sum_{k_1|\e_3\e_4} \mu(k_1) \#\left\{\al_1''
  \ \Big|\  \begin{aligned}
    &\congr{k_{25}\e_4\e_5'^2\e_6}{-k_1\e_1\al_1''}{k_{23}\e_2},\\
    &h(Y_0,\al_1''k_{25}k_1/Y_1,\e_5/Y_5,\e_6/Y_6) \le 1,
  \end{aligned}\right\}.\] 
Note that the summand vanishes unless $\cp{k_1}{k_{23}\e_2}$, so we may
restrict the summation over $k_1|\e_3\e_4$ subject to $\cp{k_1}{k_{23}\e_2}$.
Since then $\cp{k_1\e_1}{k_{23}\e_2}$, the number of $\al_1''$ is
\[\frac{Y_1}{k_1k_{23}k_{25}\e_2} g_0(Y_0,\e_5/Y_5,\e_6/Y_6) +
O(1).\]

Define $\phis(n):=\prod_{p|n}(1-1/p)$.

\begin{lemma}\label{lem:sum0}
  We have
  \[N_0 = \frac{Y_1}{\e_2} g_0(Y_0,\e_5/Y_5,\e_6/Y_6)\theta_0(\ee)
  \frac{\phis(\e_5)}{\phis(\gcd(\e_5,\e_4))} +O(R_0(\ee, \e_5, \e_6))\] with
  \[\theta_0(\ee) := \sum_{\substack{k_{23}|\e_3\\\cp{k_{23}}{\e_1\e_4}}}
  \frac{\mu(k_{23})\phis(\e_3\e_4)} {k_{23}\phis(\gcd(\e_3,k_{23}\e_2))}\] and
  \[\sum_{\e_1, \dots, \e_6} R_0(\ee,\e_5,\e_6) \ll B(\log
  B)^2.\]
\end{lemma}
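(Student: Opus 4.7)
The plan is to continue the double M\"obius inversion already carried out in the excerpt: $N_0$ has been expressed as a triple sum over $k_{23} \mid \e_3$, $k_{25} \mid \e_5$, and $k_1 \mid \e_3\e_4$ (each with its own coprimality constraint) of $\mu(k_{23})\mu(k_{25})\mu(k_1)$ times the local count $\frac{Y_1}{k_1 k_{23} k_{25} \e_2} g_0(Y_0,\e_5/Y_5,\e_6/Y_6) + O(1)$. I would carry out the inner sums over $k_{25}$ and $k_1$ explicitly, leaving $k_{23}$ as the outer variable that assembles into $\theta_0(\ee)$.

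For the $k_{25}$-sum, the condition $\cp{k_{25}}{\e_1\e_4}$ collapses to $\cp{k_{25}}{\e_4}$ by $\cp{\e_5}{\e_1}$ from \eqref{eq:cpe5}, so the standard M\"obius identity yields $\phis(\e_5)/\phis(\gcd(\e_5,\e_4))$, which is precisely the factor that appears on the right-hand side of the lemma. For the $k_1$-sum I would write $\sum \mu(k_1)/k_1 = \prod_{p \mid \e_3\e_4,\ p \nmid k_{23}\e_2}(1 - 1/p)$ and then check that every prime $p \mid \e_4$ satisfies $p \nmid k_{23}\e_2$: this uses $\cp{\e_2}{\e_4}$ from \eqref{eq:cpe} together with $\cp{k_{23}}{\e_4}$ inherited from the summation condition, and it lets me rewrite the product as $\phis(\e_3\e_4)/\phis(\gcd(\e_3, k_{23}\e_2))$. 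Multiplying the three factors and reinstating the surviving $k_{23}$-sum reproduces exactly the main term as stated.

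For the error, each $O(1)$ summand contributes, so I would take $R_0(\ee,\e_5,\e_6)$ to be the number of admissible triples $(k_{23},k_{25},k_1)$, which is bounded by $\tau(\e_3)^2\tau(\e_4)\tau(\e_5) \ll_\ep (\e_3\e_4\e_5)^\ep$. To estimate $\sum_{\e_1,\dots,\e_6} R_0$, I would apply \eqref{eq:heightN5} in the form $|\e_6| \ll B/(\e_1\e_2\e_3^2\e_4^2\e_5^2)$ to perform the $\e_6$-summation trivially; the resulting sum over $\e_3,\e_4,\e_5$ then converges absolutely thanks to the $\e_i^{\ep-2}$ factors, and the remaining range for $\e_1,\e_2$ is at most $\ll B$ (from the same height bound), contributing $(\log B)^2$ and producing the required $B(\log B)^2$ bound. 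The only genuinely delicate point is the identification of the $k_1$-sum's denominator with $\phis(\gcd(\e_3,k_{23}\e_2))$; everything else is routine Dirichlet-series book-keeping.
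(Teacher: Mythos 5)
Your proposal is correct and follows essentially the same route as the paper: the $k_{25}$- and $k_1$-sums are evaluated exactly as you describe (using $\cp{\e_5}{\e_1}$ and $\cp{\e_4}{k_{23}\e_2}$ to identify the denominators and assemble $\theta_0$), and the error is bounded by the number of divisor triples times $O(1)$, summed via the height constraint $\base 1 1 2 2 \e_5^2|\e_6|\le 2B$ to yield $B(\log B)^2$ after the convergent $\e_3,\e_4,\e_5$-sums and the two logarithmic sums over $\e_1,\e_2$. The only cosmetic differences are that the paper bounds the divisor count by $2^{\omega(\e_3)+\omega(\e_5)+\omega(\e_3\e_4)}$ rather than divisor functions with an $\ep$-power, and the height bound you invoke should be cited as \eqref{eq:further_height} rather than its dyadic version \eqref{eq:heightN5}.
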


\begin{proof}
  For the main term, note that
  \[\begin{split}
    &\sum_{\substack{k_{23}|\e_3, k_{25}|\e_5\\
        \cp{k_{23}k_{25}}{\e_1\e_4}}}
    \frac{\mu(k_{23})\mu(k_{25})}{k_{23}k_{25}}
    \sum_{\substack{k_1|\e_3\e_4\\\cp{k_1}{k_{23}\e_2}}}
    \frac{\mu(k_1)}{k_1}\\
    ={}&\sum_{\substack{k_{23}|\e_3\\
        \cp{k_{23}}{\e_1\e_4}}} \frac{\mu(k_{23})}{k_{23}}\cdot
    \frac{\phis(\e_5)}{\phis(\gcd(\e_5,\e_1\e_4))}\cdot
    \frac{\phis(\e_3\e_4)}{\phis(\gcd(\e_3\e_4,k_{23}\e_2))}.
  \end{split}\] 
  Using $\cp{\e_5}{\e_1}$ and $\cp{\e_4}{k_{23}\e_2}$, we
  obtain $\theta_0$.

  We have \[R_0(\ee,\e_5,\e_6) \ll
  2^{\omega(\e_3)+\omega(\e_5)+\omega(\e_3\e_4)}.\] We sum this over all
  suitable $\e_1, \dots, \e_6$ and use \eqref{eq:further_height} to obtain
  \[\begin{split}
    \sum_{\e_1, \dots, \e_6} R_0(\ee,\e_5,\e_6) &\ll \sum_{\e_1, \dots,
      \e_5}
    \frac{2^{\omega(\e_3)+\omega(\e_5)+\omega(\e_3\e_4)}B}{\base
      1 1 2 2 \e_5^2} \\
    & \ll B(\log B)^2,
  \end{split}\]
  completing the proof of this lemma.
\end{proof}

\section{Estimating $N_a(B)$ -- second step}\label{sec:Na}

Let $N_1^a:=N_1^a(\ee, \e_6;B)$ be the sum of the main term of
Lemma~\ref{lem:sum0} over $\e_5$ subject to \eqref{eq:cpe5} and
\eqref{eq:bige5}. We sum the main term of $N_1^a$ over $\e_6$ afterwards to
obtain $N_2^a(\ee;B)$.

Using \cite[Lemma~2]{arXiv:0710.1560} with $\alpha=0,\,q=1$, we obtain
(where $f_{a,b}(n)$ is defined to be $\phis(n)/\phis(\gcd(n,a))$ if
$\cp{n}{b}$ and to be zero otherwise)
\[
\begin{split}
  N_1^a={}&\frac{Y_1}{\e_2}\theta_0(\ee) \sum_{\e_5\ge|\e_6|}
  f_{\e_4,\e_1\e_2\e_3}(\e_5) g_0(Y_0,\e_5/Y_5,\e_6/Y_6;\ee;B)\\
  ={}&\frac{Y_1Y_5}{\e_2} g_1^a(Y_0,\e_6/Y_6;\ee;B) \theta_0(\ee)
  \frac{\phis(\e_1\e_2\e_3)}{\zeta(2)}\prod_{p|\e_1\e_2\e_3\e_4}\left(1-\frac 1 {p^2}\right)^{-1}\\
  &+O\left(\frac{Y_1}{\e_2}|\theta_0(\ee)|(\log
    B)2^{\omega(\e_1\e_2\e_3)} \sup_{t_5}
    g_0(Y_0,t_5,\e_6/Y_6;\ee;B)\right),
\end{split}\] where the supremum is taken over $t_5\ge|\e_6|/Y_5$.

\begin{lemma}\label{lem:sum1a}
  We have \[N_1^a=\frac{Y_1Y_5}{\e_2} g_1^a(Y_0,\e_6/Y_6;\ee;B)
  \theta_1^a(\ee) + O(R_1^a(\ee,\e_6;B))\]
  with
  \[\theta_1^a(\ee):=\theta_0(\ee)\frac{\phis(\e_1\e_2\e_3)}{\zeta(2)} 
  \prod_{p|\e_1\e_2\e_3\e_4}\left(1-\frac 1 {p^2}\right)^{-1}\]
  and
  \[\sum_{\ee,\e_6}R_1^a(\ee,\e_6;B) \ll B \log B.\]
\end{lemma}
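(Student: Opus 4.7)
The main term formula is a direct consequence of the cited application of \cite[Lemma~2]{arXiv:0710.1560}, so the task is to bound the quoted error $R_1^a$, showing $\sum_{\ee,\e_6}R_1^a \ll B\log B$.

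My first step would be to eliminate the supremum: by Lemma~\ref{lem:g}\eqref{it:g0}, $\sup_{t_5}g_0(Y_0,t_5,\e_6/Y_6)\ll Y_6^{1/2}/(Y_0|\e_6|^{1/2})$, uniformly in $t_5$. Plugging in the definitions of $Y_0,Y_1,Y_6$, the ratio $Y_1Y_6^{1/2}/(Y_0\e_2)$ collapses algebraically to $B^{1/2}/(\e_1\e_2)^{1/2}$, giving
\[R_1^a \ll \frac{B^{1/2}(\log B)\,2^{\omega(\e_1\e_2\e_3)}|\theta_0(\ee)|}{(\e_1\e_2)^{1/2}|\e_6|^{1/2}}.\]

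Next I would sum over $\e_6$, using the bound $|\e_6|\ll (B/\base 1 1 2 2)^{1/3}$ obtained by combining \eqref{eq:further_height} with the assumption \eqref{eq:bige5}. Then I would sum over $\e_4$, using $\e_4\ll (B/(\e_1^2\e_2^2\e_3^3))^{1/2}$ from $\base 2 2 3 2\le B$. Each of $\sum_{1\le|\e_6|\le X}|\e_6|^{-1/2}\ll X^{1/2}$ and $\sum_{\e_4\le X_4}\e_4^{-1/3}\ll X_4^{2/3}$ contributes a fractional power of $B$, and after both summations the cumulative factor telescopes cleanly to $B/(\e_1\e_2\e_3)^{4/3}$. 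So the problem reduces to bounding
\[B\log B\cdot\sum_{\e_1,\e_2,\e_3\ge 1}\frac{2^{\omega(\e_1\e_2\e_3)}|\theta_0(\ee)|}{(\e_1\e_2\e_3)^{4/3}},\]
which converges: using $\phis(\e_3\e_4)\le\phis(\gcd(\e_3,k_{23}\e_2))$ inside the defining sum for $\theta_0$ yields $|\theta_0(\ee)|\le\tau(\e_3)$ uniformly in $\e_4$, and each remaining exponent strictly exceeds $1$.

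The main obstacle is choosing height bounds whose exponents balance correctly. Different natural choices for truncating $|\e_6|$—such as the coordinate height $B/\base 2 2 3 2$—leave some $\e_i$ with exponent exactly $1$ after its sum, costing an extra logarithm and yielding only $B\log^2B$. The cubic bound above on $|\e_6|$, together with summing $\e_4$ before $\e_1,\e_2,\e_3$, is the choice that gives the tight $B\log B$ estimate.
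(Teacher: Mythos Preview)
Your argument is correct and follows the paper's scheme: bound $\sup_{t_5}g_0$ via Lemma~\ref{lem:g}\eqref{it:g0}, collapse the $Y_i$-factor to $B^{1/2}/(\e_1\e_2)^{1/2}$, then select height constraints so that the remaining sums converge. The only substantive difference is in the bookkeeping for that last step. The paper sums over $\e_4$ first, taking the geometric mean of the two bounds $\base 2 2 3 2|\e_6|\le B$ and $\base 1 1 2 2|\e_6|^3\ll B$ (both consequences of \eqref{eq:bige5}) to obtain $\e_4\ll B^{1/2}/(\e_1^{3/4}\e_2^{3/4}\e_3^{5/4}|\e_6|)$; this leaves $|\e_6|^{-3/2}$ and $(\e_1\e_2\e_3)^{-5/4}$ in the summand, each of which sums to $O(1)$. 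Your route---summing $\e_6$ first with the cubic bound, then $\e_4$ with $\base 2 2 3 2\le B$---reaches the same $B\log B$ with exponent $4/3$ in place of $5/4$. Neither choice is shorter or sharper than the other; they are two equivalent ways of balancing the same pair of inequalities.
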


\begin{proof}
  The main term is clear. Define $\phid(n):=\prod{p|n}(1+1/p)$. For the error term, we use
  Lemma~\ref{lem:g}\eqref{it:g0} to estimate its sum over $\ee, \e_6$ as
  \[\begin{split}
    &\ll \sum_{\ee,\e_6}
    \frac{2^{\omega(\e_1\e_2\e_3)}\phid(\e_3)Y_1Y_6^{1/2}\log B}
    {Y_0\e_2|\e_6|^{1/2}}\\
    &=
    \sum_{\ee,\e_6}\frac{2^{\omega(\e_1\e_2\e_3)}\phid(\e_3)B^{1/2}\log B}
    {\e_1^{1/2}\e_2^{1/2}|\e_6|^{1/2}}\\
    &\ll \sum_{\e_1,\e_2,\e_3,\e_6}
    \frac{2^{\omega(\e_1\e_2\e_3)}\phid(\e_3)B\log B} {\base
      {5/4}{5/4}{5/4}0|\e_6|^{3/2}}\\
    &\ll B\log B.
  \end{split}\] Here, we use \[\e_4 \le \left(\frac{B}{\base 2 2 3
      0|\e_6|}\right)^{1/4} \cdot \left(\frac{B}{\base 1 1 2 0
      |\e_6|^3}\right)^{1/4} = \frac{B^{1/2}}{\base
        {3/4}{3/4}{5/4}0|\e_6|}\]
      which is obtained with \eqref{eq:bige5}.
\end{proof}

To sum the main term of $N_1^a$ over $\e_6$, we remove the coprimality
condition \eqref{eq:cpe6} by a M\"obius inversion and obtain, writing
$\e_6=k_6\e_6'$ and applying partial summation,
\[\begin{split}
  N_2^a={}&\frac{Y_1Y_5}{\e_2}\theta_1^a(\ee)\sum_{k_6|\e_1\e_2\e_3\e_4}
  \mu(k_6) \sum_{|\e_6'|\ge 1} g_1^a(Y_0,\e_6'k_6/Y_6;\ee;B)\\
  ={}&\frac{Y_1Y_5Y_6}{\e_2}\theta_1^a(\ee)\sum_{k_6|\e_1\e_2\e_3\e_4}
  \frac{\mu(k_6)}{k_6} \int_{|t_6|\ge k_6/Y_6} g_1^a(Y_0,t_6;\ee;B)
  \dd t_6\\
  &+O\left(\frac{Y_1Y_5}{\e_2}|\theta_1^a(\ee)|\sum_{k_6|\e_1\e_2\e_3\e_4}
    |\mu(k_6)| \sup_{|t_6|\ge k_6/Y_6} g_1^a(Y_0,t_6;\ee;B)\right).
\end{split}\]

\begin{lemma}\label{lem:sum2a}
  We have
  \[N_2^a=\frac{Y_1Y_5Y_6}{\e_2}g_2^a(Y_0;\ee;B)\theta_2^a(\ee)
  + O(R_2^a(\ee;B))\]
  with
  \[\theta_2^a(\ee):=\theta_1^a(\ee)\phis(\e_1\e_2\e_3\e_4)\]
  and
  \[\sum_{\ee}R_2^a(\ee;B) \ll B(\log B)^{4-1/5}.\]
\end{lemma}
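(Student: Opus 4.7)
The plan is to extract the main term from the expression for $N_2^a$ displayed just before the lemma, and then bound the two resulting error contributions. For the main term, I extend each inner integral from the range $|t_6|\ge k_6/Y_6$ to $|Y_6 t_6|>1$, so that by \eqref{eq:g2a} it becomes $g_2^a(Y_0;\ee;B)$ independently of $k_6$. The standard identity $\sum_{k_6\mid m}\mu(k_6)/k_6=\phis(m)$ applied with $m=\e_1\e_2\e_3\e_4$ then converts the remaining sum into $\phis(\e_1\e_2\e_3\e_4)$, producing $\frac{Y_1Y_5Y_6}{\e_2}g_2^a(Y_0;\ee;B)\theta_2^a(\ee)$ by the definition of $\theta_2^a$.

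The error $R_2^a$ has two sources: the supremum contribution already present in the formula preceding the lemma, and the truncation error
\[\frac{Y_1Y_5Y_6}{\e_2}|\theta_1^a(\ee)|\sum_{k_6\mid\e_1\e_2\e_3\e_4}\frac{|\mu(k_6)|}{k_6}\int_{1/Y_6\le|t_6|<k_6/Y_6}g_1^a(Y_0,t_6;\ee;B)\,\dd t_6\]
introduced by the extension of the range of integration. In both I bound $g_1^a$ via Lemma~\ref{lem:g}\eqref{it:g1a}, using $g_1^a\ll Y_0^{-1/2}|t_6|^{-5/4}$ for $|t_6|\gg Y_0^6$ and $g_1^a\ll Y_0^{-8}$ otherwise, so that the relevant integration over $[1/Y_6,k_6/Y_6]$ and the supremum at the endpoint each yield explicit bounds in terms of $B$, $\ee$, $k_6$ and a fractional negative power of $Y_0$.

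Summing the resulting $R_2^a(\ee;B)$ over $\ee\in\ZZp^4$ subject to the coprimality conditions \eqref{eq:cpe6}--\eqref{eq:cpe} and to the height constraints inherited from \eqref{eq:further_height} and \eqref{eq:bige5}, the four logarithmic sums over $\e_1,\dots,\e_4$ would naively contribute $(\log B)^4$; the additional factor $Y_0^{-\delta}=(B/\base 2 2 3 2)^{\delta/5}$ for a suitable $\delta>0$ then turns one of these sums into a convergent one, delivering the saving $(\log B)^{-1/5}$, in the spirit of \cite[Lemma~8]{arXiv:0710.1560}. The main obstacle is the bookkeeping needed to ensure that neither the low-$|t_6|$ regime, where $Y_0^{-8}$ blows up precisely when $\base 2 2 3 2$ is smallest, nor the divisor-function growth from the $k_6$-sum swallows the $(\log B)^{-1/5}$ saving; once this balance is struck the bound $\sum_{\ee}R_2^a(\ee;B)\ll B(\log B)^{4-1/5}$ follows.
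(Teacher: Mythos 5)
Your extraction of the main term is correct and is exactly what the paper does: extending the range of integration to $|Y_6t_6|>1$ produces $g_2^a(Y_0;\ee;B)$, and $\sum_{k_6\mid\e_1\e_2\e_3\e_4}\mu(k_6)/k_6=\phis(\e_1\e_2\e_3\e_4)$ gives $\theta_2^a$; you also correctly identify the two error sources (the supremum term and the truncation over $1/Y_6<|t_6|<k_6/Y_6$). The gap is in the error analysis, which is precisely where the exponent $4-1/5$ must come from. Your proposed mechanism for the saving is backwards: since $\base 2 2 3 2\le B$ one has $Y_0\le 1$, so a factor $Y_0^{-\delta}=(B/\base 2 2 3 2)^{\delta/5}$ with $\delta>0$ is a \emph{loss}, not a gain, and cannot ``turn one of the sums into a convergent one.'' Moreover, the two bounds of Lemma~\ref{lem:g}\eqref{it:g1a} pull in opposite directions: the bound $Y_0^{-1/2}|t_6|^{-5/4}$ leads (as in the paper's computation) to $\phid(\e_3)B^{3/4}/\base{1/4}{1/4}0{1/2}$, whose $\e_4$-sum $\sum\e_4^{-1/2}$ must be cut off by a height condition and then overshoots $B(\log B)^4$ if one only uses $\base 2 2 3 2\le B$; the bound $Y_0^{-8}$ is summable in $\e_4$ but its coefficient $B^2/\base 4 4 5 3=(B/\base 3 3 4 2)(B/\base 1 1 1 1)$ blows up exactly when $\base 3 3 4 2$ is small. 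Neither bound, nor their pointwise minimum in $t_6$, suffices over the whole range of $\ee$; you acknowledge this obstacle but do not resolve it, and resolving it is the content of the lemma.

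What is missing is the balancing device the paper uses: split the sum over $\ee$ according to whether $\base 3 3 4 2<\lambda B$ or $\base 3 3 4 2\ge\lambda B$ for a parameter $\lambda$; in the first range use the $|t_6|^{-5/4}$ bound, where the truncated $\e_4$-sum produces a factor $\lambda^{1/4}$ and a total contribution $\ll\lambda^{1/4}B(\log B)^3$; in the second range use the $Y_0^{-8}$ bound, giving $\ll\lambda^{-1}B(\log B)^7$; then optimize $\lambda=(\log B)^{16/5}$, which is exactly what yields $B(\log B)^{4-1/5}$. Without this dichotomy-and-optimization (or an equivalent argument), your proposal does not establish $\sum_{\ee}R_2^a(\ee;B)\ll B(\log B)^{4-1/5}$, so as written it has a genuine gap.
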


\begin{proof}
  In order to replace the integral over $|t_6|\ge k_6/Y_6$ in the estimation
  before the statement of the lemma by $g_2^a(Y_0;\ee;B)$, we must
  add
  \[\frac{Y_1Y_5Y_6}{\e_2}\theta_1^a(\ee)\sum_{k_6|\e_1\e_2\e_3\e_4}
  \frac{\mu(k_6)}{k_6} \int_{1/Y_6 < |t_6| < k_6/Y_6}
  g_1^a(Y_0,t_6;\ee;B) \dd t_6\]
  as a second error term.

  We distinguish the case
  \begin{equation}\label{eq:case1}
    \base 3 3 4 2 < \lambda B
  \end{equation}
  for some $\lambda>0$ to be chosen later, giving a total contribution
  $E_1(\lambda)$ to the error term, and its opposite
  \begin{equation}\label{eq:case2}
    \base 3 3 4 2 \ge \lambda B,
  \end{equation}
  contributing in total $E_2(\lambda)$.
  
  Starting with $E_1(\lambda)$, we use the first bound of
  Lemma~\ref{lem:g}\eqref{it:g1a}. For the first error term, we obtain
  \[\begin{split}
    &\ll \sum_{\ee} \sum_{k_6|\e_1\e_2\e_3\e_4}
    \frac{|\mu(k_6)|\phid(\e_3)Y_1Y_5Y_6^{5/4}}
    {k_6^{5/4}\e_2Y_0^{1/2}}\\
    &\ll \sum_{\ee} 
    \frac{\phid(\e_3)B^{3/4}}
    {\base {1/4}{1/4}0{1/2}}\\
    &\ll \sum_{\e_1,\e_2,\e_3} \frac{\phid(\e_3)\lambda^{1/4}B}{\base 1 1 1 0}\\
    &\ll \lambda^{1/4}B(\log B)^3.
  \end{split}\]
  For the second error term, we use
  \[\int_{1/Y_6}^{k_6/Y_6} g_1^a(Y_0,t_6;\ee;B) \dd t_6 \ll
  \int_{1/Y_6}^{k_6/Y_6} \frac{1}{Y_0^{1/2}|t_6|^{5/4}} \dd t_6\ll
    \frac{Y_6^{1/4}}{Y_0^{1/2}}\]
  and obtain
  \[\begin{split}
    &\ll \sum_{\ee} \sum_{k_6|\e_1\e_2\e_3\e_4}
    \frac{|\mu(k_6)|\phid(\e_3)Y_1Y_5Y_6^{5/4}}
    {k_6\e_2Y_0^{1/2}}\\
    &\ll \sum_{\ee} \frac{\phid(\e_1\e_2\e_3\e_4)\phid(\e_3)B^{3/4}}
    {\base {1/4}{1/4}0{1/2}}\\
    &\ll \sum_{\e_1,\e_2,\e_3}
    \frac{\phid(\e_1\e_2\e_3)\phid(\e_3)\lambda^{1/4}B}
    {\base 1 1 1 0}\\
    &\ll \lambda^{1/4}B(\log B)^3.
  \end{split}\]
  Therefore, $E_1(\lambda) \ll \lambda^{1/4}B(\log B)^3$.
  
  For $E_2(\lambda)$, we use the second bound of
  Lemma~\ref{lem:g}\eqref{it:g1b}. For the first part of this error term, we
  get \[\begin{split} &\ll \sum_{\ee} \sum_{k_6|\e_1\e_2\e_3\e_4}
    \frac{|\mu(k_6)|\phid(\e_3)Y_1Y_5}{\e_2Y_0^8}\\
    &\ll \sum_{\ee} \frac{2^{\omega(\e_1\e_2\e_3\e_4)}\phid(\e_3)
      B^2}{\base 4 4 5 3}\\
    &\ll \sum_{\e_1,\e_2,\e_3} \frac{2^{\omega(\e_1\e_2\e_3)}\phid(\e_3) B\log
      B}{\lambda\base 1 1 1 0}\\
    &\ll \lambda^{-1}B(\log B)^7.
  \end{split}\]
  For the second part of the error term, we use
  \[\int_{1/Y_6}^{k_6/Y_6} g_1^a(Y_0,t_6;\ee;B) \dd t_6 \ll
  \int_{1/Y_6}^{k_6/Y_6} \frac{1}{Y_0^8} \dd t_6 \ll
  \frac{k_6}{Y_0^8Y_6}\]
  and obtain
  \[\begin{split} &\ll \sum_{\ee} \sum_{k_6|\e_1\e_2\e_3\e_4}
    \frac{|\mu(k_6)|\phid(\e_3)Y_1Y_5Y_6}{k_6\e_2}\cdot\frac{k_6}{Y_0^8Y_6} \\
    &\ll \sum_{\ee} \frac{2^{\omega(\e_1\e_2\e_3\e_4)}\phid(\e_3)
      B^2}{\base 4 4 5 3}\\
    &\ll \sum_{\e_1,\e_2,\e_3} \frac{2^{\omega(\e_1\e_2\e_3)}\phid(\e_3) B\log
      B}{\lambda\base 1 1 1 0}\\
    &\ll \lambda^{-1}B(\log B)^7.
  \end{split}\]
  In total, $E_2(\lambda) \ll \lambda^{-1}B(\log B)^7$.

  Choosing $\lambda=(\log B)^{16/5}$ gives a total error term of $O(B(\log
  B)^{4-1/5})$.
\end{proof}

\section{Estimating $N_{b_1}(B;A)$ -- second step}\label{sec:Nb1}

Let $N_1^b:=N_1^b(\ee, \e_5;B)$ be the main term of $N_0$ in
Lemma~\ref{lem:sum0} summed over $\e_6$ subject to \eqref{eq:cpe6} and
\eqref{eq:bige6}.  We denote the main term of this summed over all $\e_5$ by
$N_2^b:=N_2^b(\ee;B)$.

We remove \eqref{eq:cpe6} by a M\"obius inversion and get
\[N_1^b = \frac{Y_1}{\e_2}
\theta_0(\ee)\frac{\phis(\e_5)}{\phis(\gcd(\e_5,\e_4))}\sum_{k_6|\e_1\e_2\e_3\e_4}
\mu(k_6) A\] where \[A = \sum_{\substack{\e_6' \in \ZZnz\\k_6|\e_6'|> \e_5}}
g_0(Y_0,\e_5/Y_5,k_6\e_6'/Y_6;\ee;B).\] By partial summation, \[A =
\frac{Y_6}{k_6} g_1^b(Y_0, \e_5/Y_5;\ee;B) + O(\sup_{t_6}
g_0(Y_0,\e_5/Y_5,t_6)),\] where the supremum is taken over $t_6$ subject to
$|t_6| > \e_5/Y_6$.

\begin{lemma}\label{lem:sum1b}
  We have
  \[N_1^b = \frac{Y_1Y_6}{\e_2} g_1^b(Y_0, \e_5/Y_5;\ee;B)
    \theta_1^b(\ee)\frac{\phis(\e_5)}{\phis(\gcd(\e_5,\e_4))}+O(R_1^b(\ee,
    \e_5;B))\] with
  \[\theta_1^b(\ee) := \theta_0(\ee)\phis(\e_1\e_2\e_3\e_4) \]
  and \[\sum_{\ee, \e_5} R_1^b(\ee, \e_5;B) \ll B \log B.\]
\end{lemma}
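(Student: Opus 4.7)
The approach is to substitute the formula for $A$ from the display immediately preceding the statement into the expression for $N_1^b$, and to separate the main and error contributions.

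For the main term, inserting $A=(Y_6/k_6)\,g_1^b(Y_0,\e_5/Y_5;\ee;B)$ and pulling the $k_6$-independent factor outside the M\"obius sum leaves
\[\frac{Y_1Y_6}{\e_2}\,g_1^b(Y_0,\e_5/Y_5;\ee;B)\,\theta_0(\ee)\,\frac{\phis(\e_5)}{\phis(\gcd(\e_5,\e_4))}\sum_{k_6\mid\e_1\e_2\e_3\e_4}\frac{\mu(k_6)}{k_6}.\]
The remaining sum is the Euler product $\phis(\e_1\e_2\e_3\e_4)$, producing exactly the claimed $\theta_1^b(\ee)$.

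For the error term, I would start from
\[R_1^b(\ee,\e_5;B)\ll\frac{Y_1}{\e_2}\,|\theta_0(\ee)|\,\frac{\phis(\e_5)}{\phis(\gcd(\e_5,\e_4))}\,2^{\omega(\e_1\e_2\e_3\e_4)}\sup_{|t_6|>\e_5/Y_6}g_0(Y_0,\e_5/Y_5,t_6)\]
and apply Lemma~\ref{lem:g}\eqref{it:g0} to obtain $\sup g_0\ll Y_6^{1/2}/(Y_0\e_5^{1/2})$. A direct computation with the definitions of $Y_0,Y_1,Y_6$ yields the clean identity $Y_1Y_6^{1/2}/Y_0=B^{1/2}(\e_2/\e_1)^{1/2}$, so, using $\phis(\e_5)/\phis(\gcd(\e_5,\e_4))\le 1$,
\[R_1^b(\ee,\e_5;B)\ll\frac{B^{1/2}\,2^{\omega(\e_1\e_2\e_3\e_4)}\,|\theta_0(\ee)|}{\e_1^{1/2}\e_2^{1/2}\e_5^{1/2}}.\]

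The main obstacle is verifying the global bound $\sum_{\ee,\e_5}R_1^b\ll B\log B$. My plan is to sum in the order $\e_5$, then $\e_4$, then $\e_1,\e_2,\e_3$. Combining \eqref{eq:bige6} with $\base 1 1 2 2\,\e_5^2|\e_6|\ll B$ from \eqref{eq:further_height} gives $\e_5\ll(B/\base 1 1 2 2)^{1/3}$, so $\sum_{\e_5\ge 1}\e_5^{-1/2}\ll(B/\base 1 1 2 2)^{1/6}$. Similarly, $\e_1^2\e_2^2\e_3^3\e_4^2\ll B$ from \eqref{eq:heightN1} gives $\sum_{\e_4}\e_4^{-1/3}\ll(B/\e_1^2\e_2^2\e_3^3)^{1/3}$. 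Multiplying these bounds together produces $B/(\e_1\e_2\e_3)^{4/3}$ times the benign arithmetic factors, and the outer sum $\sum_{\e_1,\e_2,\e_3\ge 1}(\e_1\e_2\e_3)^{-4/3}$ converges absolutely. The factors $2^{\omega(\cdot)}$ and the divisor-type bound $|\theta_0(\ee)|$ contribute at most one additional power of $\log B$, yielding the required $O(B\log B)$.
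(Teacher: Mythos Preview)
Your argument is correct and follows the paper's approach closely. The main term is handled identically, and the pointwise bound
\[R_1^b(\ee,\e_5;B)\ll\frac{B^{1/2}\,2^{\omega(\e_1\e_2\e_3\e_4)}\,|\theta_0(\ee)|}{\e_1^{1/2}\e_2^{1/2}\e_5^{1/2}}\]
is exactly what the paper obtains (the paper simply writes $\phid(\e_3)$ for the bound on $|\theta_0(\ee)|$).

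The only difference is the order of summation in the global estimate. The paper sums over $\e_4$ first, using the geometric-mean bound
\[\e_4\le\Bigl(\frac{B}{\base 2 2 3 0\e_5}\Bigr)^{1/4}\Bigl(\frac{B}{\base 1 1 2 0\e_5^3}\Bigr)^{1/4}=\frac{B^{1/2}}{\base{3/4}{3/4}{5/4}0\e_5},\]
which combines \eqref{eq:further_height} with \eqref{eq:bige6} and $|\Psi_0|\le B$; this immediately makes the remaining sums over $\e_1,\e_2,\e_3,\e_5$ absolutely convergent. You instead sum over $\e_5$ first and then over $\e_4$, using the two constraints separately. Both routes yield $O(B\log B)$; the paper's combined bound is slightly slicker because it avoids having a positive power of $B$ left after the first summation.

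One small point: your citation of \eqref{eq:heightN1} is off --- that inequality is for the dyadic parameters $N_i$ in the $N_{b_2}$ analysis. The bound $\base 2 2 3 2\le B$ that you need follows directly from $|\Psi_0|=\base 2 2 3 2\e_5\le B$ together with $\e_5\ge 1$.
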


\begin{proof}
  The main term is clear. We apply Lemma~\ref{lem:g}\eqref{it:g0} to deduce
  that the error term can be estimated as \[
  \begin{split}
    \sum_{\ee, \e_5} R_1^b(\ee, \e_5;B) &\ll
    \sum_{\ee, \e_5}\frac{2^{\omega(\e_1\e_2\e_3\e_4)}\phid(\e_3)Y_1Y_6^{1/2}}{\e_2\e_5^{1/2}Y_0}\\
    &= \sum_{\ee, \e_5}\frac{2^{\omega(\e_1\e_2\e_3\e_4)}\phid(\e_3)B^{1/2}}{\e_1^{1/2}\e_2^{1/2}\e_5^{1/2}}\\
    &\ll \sum_{\e_1,\e_2,\e_3,\e_5}
    \frac{2^{\omega(\e_1\e_2\e_3)}\phid(\e_3)B\log
      B}{\base{5/4}{5/4}{5/4}0\e_5^{3/2}}\\
    &\ll B\log B.
  \end{split}\]
  In the last step, we have used \[\e_4 \le \left(\frac{B}{\base 2 2 3
      0\e_5}\right)^{1/4} \cdot \left(\frac{B}{\base 1 1 2 0
      \e_5^3}\right)^{1/4} = \frac{B^{1/2}}{\base
        {3/4}{3/4}{5/4}0\e_5},\] which we obtain using
      \eqref{eq:further_height} and \eqref{eq:bige6}.
\end{proof}

Next, we sum the main term of Lemma~\ref{lem:sum1a} over all suitable
$\e_5$. Apply \cite[Lemma~2]{arXiv:0710.1560} with $\al=0,\,q=1$ to obtain
\[
\begin{split}
  N_2^b ={}&\frac{Y_1Y_6}{\e_2}\theta_1^b(\ee)\sum_{\e_5 \ge 1}
  f_{\e_4,\e_1\e_2\e_3}(\e_5)g_1^b(Y_0,\e_5/Y_5;\ee;B)\\
  ={}&\frac{Y_1Y_5Y_6}{\e_2} g_2^b(Y_0;\ee;B)
  \theta_1^b(\ee)\frac{\phis(\e_1\e_2\e_3)}{\zeta(2)}\prod_{p|\e_1\e_2\e_3\e_4}\left(1-\frac 1 {p^2}\right)^{-1}\\
  &+O\left(\frac{Y_1Y_6}{\e_2}|\theta_1^b(\ee)|(\log
    B)2^{\omega(\e_1\e_2\e_3)} \sup_{t_5}
    g_1^b(Y_0,t_5;\ee;B)\right)\\
  &+O\left(\frac{Y_1Y_5Y_6}{\e_2}|\theta_1^b(\ee)| \int_{0 \le t_5 \le
      1/Y_5} g_1^b(Y_0,t_5;\ee;B)\dd t_5\right),
\end{split}\]
where the supremum is taken over $t_5> 1/Y_5$.

\begin{lemma}\label{lem:sum2b}
  We have \[N_2^b =
  \frac{Y_1Y_5Y_6}{\e_2}g_2^b(Y_0;\ee;B)\theta_2^b(\ee) +
  O(R_2^b(\ee;B))\]
  with
  \[\theta_2^b(\ee):=\theta_1^b(\ee)\frac{\phis(\e_1\e_2\e_3)}{\zeta(2)}\prod_{p|\e_1\e_2\e_3\e_4}\left(1-\frac 1 {p^2}\right)^{-1}\]
  and
  \[\sum_{\ee} R_2^b(\ee;B) \ll B(\log B)^{7-A/4},\]
  where the sum is taken over $\ee$ satisfying \eqref{eq:logBA}.
\end{lemma}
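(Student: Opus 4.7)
The plan is to split the argument into identification of the main term and bounding of the two error terms appearing in the display of $N_2^b$ preceding the lemma. The main term $\frac{Y_1Y_5Y_6}{\e_2} g_2^b(Y_0;\ee;B) \theta_2^b(\ee)$ is immediate: the integral $\int_0^\infty g_1^b(Y_0,t_5;\ee;B)\dd t_5$ produced by \cite[Lemma~2]{arXiv:0710.1560} equals $g_2^b(Y_0;\ee;B)$ by the definitions \eqref{eq:g1b} and \eqref{eq:g2b}, and multiplying $\theta_1^b(\ee)$ by the arithmetic coefficient supplied by that lemma yields precisely the stated $\theta_2^b(\ee)$. So the substance is to bound the sum over $\ee$ of the supremum term and the integral-over-$[0,1/Y_5]$ term.

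Both error terms will be estimated using Lemma~\ref{lem:g}\eqref{it:g1b}, which gives $g_1^b(Y_0,t_5;\ee;B) \ll 1/(Y_0 t_5^{3/4})$. Using $Y_0 = Y_5^{-1}$, the supremum over $t_5 > 1/Y_5$ is attained as $t_5 \downarrow 1/Y_5$ and is $\ll Y_5^{3/4}/Y_0 = Y_5^{7/4}$, while $\int_0^{1/Y_5}(Y_0 t_5^{3/4})^{-1}\dd t_5 \ll Y_5^{-1/4}/Y_0 = Y_5^{3/4}$. After multiplication by the prefactors $Y_1Y_6/\e_2$ and $Y_1Y_5Y_6/\e_2$ respectively, both error terms take the common shape $\frac{Y_1Y_5^{7/4}Y_6}{\e_2}|\theta_1^b(\ee)|$, with the first carrying an extra factor $(\log B)\cdot 2^{\omega(\e_1\e_2\e_3)}$. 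A short calculation from the definitions of the $Y_i$ yields the key simplification
\[
\frac{Y_1Y_5^{7/4}Y_6}{\e_2} \;=\; \frac{B^{3/4}}{\e_1^{1/2}\e_2^{1/2}\e_3^{1/4}\e_4^{1/2}}.
\]

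I then sum over $\ee$ by handling $\e_4$ first. Hypothesis \eqref{eq:logBA} gives $\e_4 \ll B^{1/2}/((\log B)^{A/2}\base 1 1 {3/2} 0)$, so $\sum_{\e_4}\e_4^{-1/2} \ll B^{1/4}/((\log B)^{A/4}\base{1/2}{1/2}{3/4}0)$, collapsing the $\ee$-dependent piece to $B/((\log B)^{A/4}\base 1 1 1 0)$. The remaining sum over $\e_1,\e_2,\e_3$ is estimated using the bound $|\theta_1^b(\ee)| \ll \phid(\e_1\e_2\e_3\e_4)\phid(\e_3)$ together with the standard estimate $\sum_{n\le B} 2^{\omega(n)}/n \ll (\log B)^2$ applied in each of the three variables; this gives $(\log B)^6$, and combining with the extra $\log B$ from the first error term yields the asserted $B(\log B)^{7-A/4}$.

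The chief obstacle will be the careful bookkeeping of the multiplicative factors $\theta_0, \phis, \phid, 2^{\omega}$ in the triple sum so as to confirm that their combined contribution is really no larger than $(\log B)^6$; the rest of the argument is a direct manipulation of the $Y_i$ and of the height hypothesis \eqref{eq:logBA}.
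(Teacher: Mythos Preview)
Your proposal is correct and follows essentially the same route as the paper: both use Lemma~\ref{lem:g}\eqref{it:g1b} to bound $g_1^b$, arrive at the common shape $Y_1Y_5^{3/4}Y_6/(\e_2Y_0)=B^{3/4}/\base{1/2}{1/2}{1/4}{1/2}$, sum over $\e_4$ using \eqref{eq:logBA}, and then handle $\e_1,\e_2,\e_3$ by the $\sum 2^{\omega(n)}/n\ll(\log B)^2$ estimate. The only difference is cosmetic: the paper uses the sharper bound $|\theta_1^b(\ee)|\ll\phid(\e_3)$ (since $\phis\le 1$ everywhere in $\theta_0$ and $\theta_1^b$), whereas your bound $\phid(\e_1\e_2\e_3\e_4)\phid(\e_3)$ is looser but harmless—the extra $\phid$ factors are absorbed into the $(\log B)^2$ per variable without changing the exponent.
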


\begin{proof}
  The main term is clear from the discussion before the lemma. The first
  part of the error term makes the contribution \[\ll \sum_{\ee}
  \frac{2^{\omega(\e_1\e_2\e_3)}\phid(\e_3)Y_1Y_6 \log B}{\e_2} \sup
  g_1^b(Y_0,t_5;\ee;B).\]
  We use Lemma~\ref{lem:g}\eqref{it:g1b} and \eqref{eq:logBA} to obtain
  \[\begin{split}
    &\ll \sum_{\ee}
    \frac{2^{\omega(\e_1\e_2\e_3)}\phid(\e_3)Y_1Y_5^{3/4}Y_6
      \log B}{\e_2Y_0}\\
    &= \sum_{\ee} \frac{2^{\omega(\e_1\e_2\e_3)}\phid(\e_3)B^{3/4}
      \log B}{\base {1/2}{1/2}{1/4}{1/2}}\\
    &\ll \sum_{\e_1,\e_2,\e_3} \frac{2^{\omega(\e_1\e_2\e_3)}\phid(\e_3)B(\log
      B)^{1-A/4}}{\base 1 1 1 0}\\
    &\ll B(\log B)^{7-A/4}.
  \end{split}\]

  The contribution of the second term is, using
  Lemma~\ref{lem:g}\eqref{it:g1b} and \eqref{eq:logBA} again,
  \[
  \begin{split}
    &\ll \sum_{\ee} \frac{\phid(\e_3) Y_1Y_5Y_6}{\e_2}
    \int_0^{1/Y_5}\frac{1}{Y_0t_5^{3/4}} \dd t_5\\
      &\ll \sum_{\ee} \frac{\phid(\e_3) Y_1Y_6^{5/4}Y_6}{\e_2Y_0}\\
      &= \sum_{\ee}\frac{\phid(\e_3)B^{3/4}}{\base {1/2}{1/2}{1/4}{1/2}}\\
    &\ll \sum_{\e_1,\e_2,\e_3} \frac{\phid(\e_3)B(\log
      B)^{-A/4}}{\base 1 1 1 0}\\
    &\ll B(\log B)^{3-A/4}.
  \end{split}\]
  This completes the proof of the lemma.
\end{proof}

\section{The final step}\label{sec:final}

By the discussion at the end of Section~\ref{sec:torsor}, we have, for any
$A>0$,
\[N_{U,H}(B) = N_a(B)+N_{b_1}(B;A)+N_{b_2}(B;A).\]

By Lemma~\ref{lem:Nb2}, \[N_{U,H}(B) = N_a(B)+N_{b_1}(B;A)+O_A(B(\log
B)^3(\log \log B)^2).\] Using Lemmas~\ref{lem:sum0}, \ref{lem:sum1a} and
\ref{lem:sum2a} and combining their error
terms shows that
\[N_a(B) = \sum_{\ee \in \EE(B)}
\frac{Y_1Y_5Y_6}{\e_2}g_2^a(Y_0;\ee;B)\theta_2^a(\ee)+O(B(\log B)^{4-1/5}),\]
where \[\EE(t) := \{\ee \in \ZZp^4 \mid \eqref{eq:cpe}, \base 2 2 3 2 \le
t\}\] for any $t\ge 1$, while Lemmas~\ref{lem:sum0}, \ref{lem:sum1b} and
\ref{lem:sum2b} give, choosing $A:=28$,
\[N_{b_1}(B;28) = \sum_{\ee \in \EE(B/(\log B)^{28})}
\frac{Y_1Y_5Y_6}{\e_2}g_2^b(Y_0;\ee;B)\theta_2^b(\ee)+O(B(\log B)^2).\]

Recall the definition~\eqref{eq:g2} of $g_2$.

\begin{lemma}\label{lem:final1}
  We have \[N_{U,H}(B) = \sum_{\ee \in \EE(B)} \frac{Y_1Y_5Y_6}{\e_2}
  g_2(Y_0;\ee;B) \theta(\ee) + O(B(\log B)^{4-1/5})\] where
  \[
  \begin{split}
    \theta(\ee):={}&\frac{\phis(\e_3\e_4)\phis(\e_1\e_2\e_3)\phis(\e_1\e_2\e_3\e_4)}{\zeta(2)}\prod_{p|\e_1\e_2\e_3\e_4}\left(1-\frac
      1
      {p^2}\right)^{-1}\\&\times\left(\sum_{\substack{k_{23}|\e_3\\\cp{k_{23}}{\e_1\e_4}}}\frac{\mu(k_{23})}{k_{23}\phis(\gcd(\e_3,k_{23}\e_2))}\right)
  \end{split}
  \] if the coprimality conditions \eqref{eq:cpe} hold, and $\theta(\ee):=0$
  otherwise.
\end{lemma}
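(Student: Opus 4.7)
My plan is to piece together the main terms from Sections~\ref{sec:N0}--\ref{sec:Nb1} and absorb everything else into the error. From the decomposition at the start of this section, together with Lemma~\ref{lem:Nb2} applied with $A=28$, the term $N_{b_2}(B;28)$ already fits inside $O(B(\log B)^{4-1/5})$. Chaining Lemmas~\ref{lem:sum0}, \ref{lem:sum1a}, \ref{lem:sum2a} (inserting each main term into the sum of the next) and summing the three accumulated error bounds $B(\log B)^2$, $B\log B$, $B(\log B)^{4-1/5}$ yields
\[N_a(B) = \sum_{\ee \in \EE(B)} \frac{Y_1Y_5Y_6}{\e_2} g_2^a(Y_0;\ee;B)\theta_2^a(\ee) + O(B(\log B)^{4-1/5}).\]
The analogous chain of Lemmas~\ref{lem:sum0}, \ref{lem:sum1b}, \ref{lem:sum2b} with $A=28$ gives
\[N_{b_1}(B;28) = \sum_{\ee \in \EE(B/(\log B)^{28})} \frac{Y_1Y_5Y_6}{\e_2} g_2^b(Y_0;\ee;B)\theta_2^b(\ee) + O(B(\log B)^2),\]
using that $B(\log B)^{7-A/4} = B$ when $A=28$.

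Next I would verify $\theta_2^a(\ee) = \theta_2^b(\ee) = \theta(\ee)$ by direct substitution of the definitions. Both simplify to
\[\theta_0(\ee)\cdot\frac{\phis(\e_1\e_2\e_3)\phis(\e_1\e_2\e_3\e_4)}{\zeta(2)}\prod_{p\mid\e_1\e_2\e_3\e_4}\left(1-\frac 1 {p^2}\right)^{-1},\]
and inserting the definition of $\theta_0(\ee)$, which contains an overall factor $\phis(\e_3\e_4)$, reproduces the formula in the statement. With the decomposition $g_2 = g_2^a + g_2^b$ from~\eqref{eq:g2}, the two main terms combine into a single sum with integrand $\frac{Y_1Y_5Y_6}{\e_2}g_2(Y_0;\ee;B)\theta(\ee)$, provided the summation ranges are reconciled.

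The main obstacle is exactly this range discrepancy: the $N_{b_1}$ main term runs only over $\EE(B/(\log B)^{28})$, so I must extend it to $\EE(B)$ at negligible cost. For the extension error, I would use the bound $g_2^b(Y_0;\ee;B) \ll Y_0^{-2}$ obtained by integrating Lemma~\ref{lem:g}\eqref{it:g1b} over $0 < t_5 \ll Y_0^{-4}$ (the natural support coming from $h\le 1$), together with the direct identity $\frac{Y_1Y_5Y_6}{\e_2 Y_0^2} = \frac{B}{\e_1\e_2\e_3\e_4}$, which follows by inspection from the definitions of $Y_0,Y_1,Y_5,Y_6$. Since $|\theta(\ee)|$ is bounded on average by a multiplicative function whose Euler factors are $1+O(1/p)$, standard estimates give
\[\sum_{\substack{\ee\in\EE(B)\\ \base 2 2 3 2 > B/(\log B)^{28}}} \frac{|\theta(\ee)|}{\e_1\e_2\e_3\e_4} \ll (\log B)^3\log\log B,\]
because the missing logarithmic range has length only $28\log\log B$, far smaller than the total $\log B$. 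The extension therefore contributes $O(B(\log B)^3\log\log B)$, which is absorbed into $O(B(\log B)^{4-1/5})$, finishing the proof.
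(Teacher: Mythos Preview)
Your proposal is correct and follows essentially the same route as the paper: you combine the main terms from Lemmas~\ref{lem:sum2a} and~\ref{lem:sum2b}, identify $\theta_2^a=\theta_2^b=\theta$, and then extend the summation range for the $g_2^b$-part from $\EE(B/(\log B)^{28})$ to $\EE(B)$ using the bound $g_2^b(Y_0;\ee;B)\ll Y_0^{-2}$ (obtained exactly as you describe, via Lemma~\ref{lem:g}\eqref{it:g1b} on the support $t_5\ll Y_0^{-4}$) together with the identity $Y_1Y_5Y_6/(\e_2Y_0^2)=B/\base 1111$ and the $\log\log B$ saving from the short logarithmic range. The paper's proof is the same argument, with the pointwise bound $|\theta(\ee)|\ll\phid(\e_3)$ made explicit where you invoke ``a multiplicative function whose Euler factors are $1+O(1/p)$''.
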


\begin{proof}
  We easily check that $\theta(\ee)$ agrees with $\theta_2^a(\ee)$ and
  $\theta_2^b(\ee)$ for $\ee$ satisfying \eqref{eq:cpe}.

  In view of the discussion before the lemma, it remains to show that
  \[\sum_{\ee \in \EE(B) \setminus \EE(B/(\log B)^{28})}
  \frac{Y_1Y_5Y_6}{\e_2} g_2^b(Y_0;\ee;B)\theta_2^b(\ee)\] makes a negligible
  contribution.

  Indeed, we estimate this as \[\begin{split} &\ll \sum_{\ee \in \EE(B)
      \setminus \EE(B/(\log
      B)^{28})} \frac{\phid(\e_3)Y_1Y_5Y_6}{\e_2Y_0^2} \\
    &=\sum_{\ee \in \EE(B) \setminus \EE(B/(\log B)^{28})}
    \frac{\phid(\e_3)B}{\base 1 1 1 1}\\
    &\ll B(\log B)^3(\log \log B)
  \end{split}\] since we have \[g_2^b(t_0;\ee;B) \ll \int_{0}^{1/t_0^4}
  g_1^b(t_0,t_5;\ee;B) \dd t_5 \ll \int_{0}^{1/t_0^4} \frac{1}{t_0t_5^{3/4}}
  \dd t_5 \ll \frac{1}{t_0^2},\] using Lemma~\ref{lem:g}\eqref{it:g1b} and the
  fact that $g_1^b(t_0,t_5;\ee;B) = 0$ unless $t_5 \ll 1/t_0^4$
  by~\eqref{eq:h}.
\end{proof}

Define \[\EE^*(B) := \{ \ee \in \ZZp^4 \mid \base 2 2 3 2 \le
B, \base 3 3 4 2 > B\}.\]

\begin{lemma}\label{lem:final2}
  We have \[N_{U,H}(B) = \omega_\infty B\sum_{\ee \in \EE^*(B)}
  \frac{\theta(\ee)}{\base 1 1 1 1} + O(B(\log B)^{4-1/5}).\]
\end{lemma}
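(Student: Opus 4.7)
The plan is, starting from Lemma~\ref{lem:final1}: (i) replace $g_2(Y_0;\ee;B)$ by $G_2(Y_0)=\omega_\infty/Y_0^2$ (Lemma~\ref{lem:G}), and (ii) shrink the summation range from $\EE(B)$ to $\EE^*(B)$. The algebraic identity $Y_1Y_5Y_6/(\e_2Y_0^2)=B/\base 1 1 1 1$, verified by direct computation from the definitions of $Y_0,Y_1,Y_5,Y_6$, converts the $G_2$-term into the target main term. The guiding observation is $1/Y_6=Y_0^6\cdot B/\base 3 3 4 2$, which shows that the defining condition $\base 3 3 4 2>B$ of $\EE^*(B)$ is equivalent to $1/Y_6<Y_0^6$.

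For step (i), write $G_2-g_2=\int_{|t_6|\le 1/Y_6,\,t_5>0,\,h\le 1}\dd t_1\dd t_5\dd t_6$. On $\EE^*(B)$, the range $|t_6|\le 1/Y_6<Y_0^6$ lies in the regime where Lemma~\ref{lem:g}\eqref{it:g1a} gives $\int_0^\infty g_0\,\dd t_5\ll Y_0^{-8}$, hence $G_2-g_2\ll Y_0^{-8}/Y_6$. Translating to $\ee$-variables produces an error $\ll B^2/(\e_1^4\e_2^4\e_3^5\e_4^3)$ per term. The constraint $\base 3 3 4 2>B$ forces $\e_4\gg B^{1/2}/(\e_1^{3/2}\e_2^{3/2}\e_3^2)$, and $\sum_{\e_4>M}\e_4^{-3}\ll M^{-2}$ collapses the total to $\ll B\sum\sigma_{-1}(\e_3)/(\e_1\e_2\e_3)$ over $\e_1^3\e_2^3\e_3^4\ll B$, which evaluates to $O(B(\log B)^3)$ using $|\theta(\ee)|\ll\sigma_{-1}(\e_3)\log\log B$.

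For step (ii), on $\EE(B)\setminus\EE^*(B)$ (equivalently $Y_6\le Y_0^{-6}$) I estimate $\frac{Y_1Y_5Y_6}{\e_2}g_2|\theta|$ directly. The key refinement is that the support of $g_1^b(Y_0,t_5;\ee;B)$ in $t_5$ is restricted to $t_5\le 2^{1/3}Y_6^{2/3}$: indeed $|Y_6t_6|>1$ combined with $|t_5^3t_6^2|\le 2$ (from $h\le 1$, as used in the proof of Lemma~\ref{lem:g}) forces this. Integrating Lemma~\ref{lem:g}\eqref{it:g1b} only over this truncated range yields the sharper bound $g_2^b\ll Y_6^{1/6}/Y_0$, which in the present regime dominates $g_2^a\ll Y_0^{-1/2}Y_6^{1/4}$. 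The result $\frac{Y_1Y_5Y_6}{\e_2}g_2\ll B^{5/6}/(\e_1^{1/2}\e_2^{1/2}\e_3^{1/3}\e_4^{2/3})$, summed via $\sum_{\e_4\le X}\e_4^{-2/3}\ll X^{1/3}$ with $X=B^{1/2}/(\e_1^{3/2}\e_2^{3/2}\e_3^2)$, again produces $O(B(\log B)^3)$.

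The main obstacle is step (ii): the crude estimate $g_2\le G_2\ll Y_0^{-2}$ would produce only $O(B(\log B)^4)$, which exceeds the allowed error. The truncated $t_5$-support giving $g_2^b\ll Y_6^{1/6}/Y_0$ trades the divergent sum $\sum 1/\e_4\sim\log B$ for a convergent $\sum 1/\e_4^{2/3}\ll\e_4^{1/3}$, saving exactly one logarithm and producing the required $O(B(\log B)^{4-1/5})$.
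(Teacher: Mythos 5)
Your proposal is correct and essentially reproduces the paper's own argument: the same decomposition into (a) bounding the tail $\sum_{\ee \in \EE(B)\setminus\EE^*(B)}\frac{Y_1Y_5Y_6}{\e_2}g_2\theta$ and (b) replacing $g_2$ by $G_2(Y_0)=\omega_\infty/Y_0^2$ on $\EE^*(B)$ via the bound $\int_0^\infty g_0 \dd t_5 \ll Y_0^{-8}$ of Lemma~\ref{lem:g}\eqref{it:g1a} together with $\sum_{\e_4>M}\e_4^{-3}\ll M^{-2}$, finishing with the identity $\frac{Y_1Y_5Y_6}{\e_2Y_0^2}=\frac{B}{\base 1 1 1 1}$. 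The only (harmless) deviation is in (a), where the paper simply integrates the first bound of Lemma~\ref{lem:g}\eqref{it:g1a} over $|t_6|>1/Y_6$ to get $g_2\ll Y_6^{1/4}/Y_0^{1/2}$ — in the regime $Y_6\le Y_0^{-6}$ this is actually at least as sharp as your truncated-$t_5$ bound $g_2^b\ll Y_6^{1/6}/Y_0$ — but your version likewise yields $O(B(\log B)^3)$ there, so both suffice.
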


\begin{proof}
  By Lemma~\ref{lem:g}\eqref{it:g1a}, we have \[g_2(Y_0;\ee;B) \ll
  \int_{|Y_6t_6| > 1} \frac{1}{Y_0^{1/2}|t_6|^{5/4}} \dd t_6 \ll
  \frac{Y_6^{1/4}}{Y_0^{1/2}}.\] Therefore, \[\begin{split}
    \sum_{\ee \in \EE(B)\setminus \EE^*(B)} \frac{Y_1Y_5Y_6}{\e_2}
    g_2(Y_0;\ee;B)\theta(\ee)
    &\ll\sum_{\base 3 3 4 2 \le B}
    \frac{\phid(\e_3)Y_1Y_5Y_6^{5/4}}{\e_2Y_0^{1/2}}\\
    &\ll\sum_{\base 3 3 4 2 \le B} \frac{\phid(\e_3)B^{3/4}}{\base
      {1/4}{1/4}{0}{1/2}}\\
    &\ll\sum_{\e_1, \e_2, \e_3} \frac{\phid(\e_3)B}{\base 1 1 1 0}\\
    &\ll B(\log B)^3.
  \end{split}\] This proves that \[N_{U,H}(B) = \sum_{\ee \in \EE^*(B)}
  \frac{Y_1Y_5Y_6}{\e_2}g_2(Y_0;\ee;B)\theta(\ee) + O(B(\log B)^{4-1/5}).\]

  Comparing the definitions \eqref{eq:g2} and \eqref{eq:G} of the functions
  $g_2$ and $G$ together with the estimation
  \[\begin{split}
    & \sum_{\ee \in \EE^*(B)} \frac{Y_1Y_5Y_6}{\e_2} \int_{h(Y_0,t_1,t_5,t_6)
      \le 1, |Y_6 t_6| \le 1, t_5>0} \dd t_1 \dd t_5 \dd t_6\\
    \ll{} & \sum_{\ee \in \EE^*(B)} \frac{\phid(\e_3)Y_1Y_5Y_6}{\e_2}
    \int_{|Y_6t_6|\le 1}
    \frac{1}{Y_0^8} \dd t_6 \\
    \ll{} & \sum_{\ee \in \EE^*(B)} \frac{\phid(\e_3)Y_1Y_5}{\e_2Y_0^8}\\
    = & \sum_{\ee \in \EE^*(B)} \frac{\phid(\e_3)B^2}{\base 4 4 5 3}\\
    \ll{} & \sum_{\e_1, \e_2, \e_3} \frac{\phid(\e_3)B}{\base 1 1 1 0}\\
    \ll{} & B(\log B)^3
  \end{split}\] shows that \[N_{U,H}(B) = \sum_{\ee \in \EE^*(B)}
  \frac{Y_1Y_5Y_6}{\e_2} G_2(Y_0) \theta(\ee) + O(B(\log B)^{4-1/5}).\]
  Finally, we note that \[\frac{Y_1Y_5Y_6}{\e_2}G_2(Y_0) =
  \frac{Y_1Y_5Y_6}{\e_2Y_0^2}\omega_\infty = \frac{B}{\base 1 1 1
    1}\omega_\infty\] using Lemma~\ref{lem:G}.
\end{proof}

For $\kk = (k_1, k_2, k_3, k_4) \in \ZZp^4$, let \[\Delta_\kk(n) := \sum_{\ee
  \in \ZZp^4, \base{k_1}{k_2}{k_3}{k_4}=n} \frac{\theta(\ee)}{\base 1 1 1
  1}.\]
Consider the Dirichlet series \[F_\kk(s) = \sum_{n=1}^\infty
\frac{\Delta_\kk(n)}{n^{s}} = \sum_{\ee\in \ZZp^4}
\frac{\theta(\ee)}{\base{k_1s+1}{k_2s+1}{k_3s+1}{k_4s+1}}.
\] 
It is absolutely convergent for $\RE(s)>0$. We write it as an Euler product
$F_\kk(s) = \prod_p F_{\kk,p}(s)$, where we compute that $F_{\kk,p}(s)$ is
\begin{multline*}
  (1-1/p)\cdot \left((1+1/p)+\frac{1-1/p}{p^{k_1s+1}-1} +
    \frac{1-1/p}{p^{k_2s+1}-1} + \frac{1-1/p}{p^{k_4s+1}-1}\right.\\
  \left.+\frac{1-1/p}{p^{k_3s+1}-1}\left((1-2/p)+\frac{1-1/p}{p^{k_1s+1}-1} +
      \frac{1-1/p}{p^{k_2s+1}-1} + \frac{1-1/p}{p^{k_4s+1}-1}\right)\right).
\end{multline*} For $\ep > 0$ and $k \in \{(2,2,3,2), (3,3,4,2)\}$ and all $s
\in \CC$ lying in the half-plane $\RE(s) \ge -1/8+\ep$, we have
\[F_{\kk,p}(s) \prod_{j=1}^4\left(1-\frac{1}{p^{k_js+1}}\right) =
1+O_\ep(p^{-1-\ep}).\] We define \[E_\kk(s) := \prod_{j=1}^4 \zeta(k_js+1),
\quad G_\kk(s) := \frac{F_\kk(s)}{E_\kk(s)}\] and note that $F_{\kk}(s)$ has a
meromorphic continuation to $\RE(s) \ge -1/8+\ep$ with a pole of order 4 at
$s=0$.

As in \cite[Lemma~15]{arXiv:0710.1560}, we use a Tauberian theorem to show that
\[M_\kk(t):= \sum_{n \le t} \Delta_\kk(n)\] can be estimated as
\[\frac{G_\kk(0) P(\log t)}{4! \prod_{j=1}^4 k_j} + O(t^{-\delta})\] for some
$\delta>0$ and $P$ a monic polynomial of degree 4.

Using Lemma~\ref{lem:final2} and the definitions of $\Delta_\kk$ and $M_\kk$,
\[\begin{split}
  N_{U,H}(B) &= \omega_\infty B \sum_{n \le B}
  (\Delta_{(2,2,3,2)}(n)-\Delta_{3,3,4,2}(n)) + O(B(\log B)^{4-1/5})\\
  &= \omega_\infty G_\kk(0) \frac{1}{4!}\left(\frac{1}{2^3\cdot 3}-\frac{1}{2
      \cdot 3^2\cdot 4}\right) B(\log B)^4 + O(B(\log B)^{4-1/5})
\end{split}\] 
Since \[\alpha(\tS) = \frac{1}{864} =
\frac{1}{4!}\left(\frac{1}{2^3\cdot 3}-\frac{1}{2 \cdot 3^2\cdot
    4}\right)\] and \[G_\kk(0)= \prod_p\left(1-\frac 1
  p\right)^5\left(1+\frac 5 p +\frac 1{p^2}\right),\] this completes the proof
of the theorem.

\bibliographystyle{amsalpha}

\bibliography{manin_dp5_a2}

\end{document}